\newtheorem{theorem}{Theorem}[section]
\newtheorem*{theorem*}{Theorem}
\newtheorem{lemma}[theorem]{Lemma}
\newtheorem*{lemma*}{Lemma}
\newtheorem{corollary}[theorem]{Corollary}
\newtheorem{proposition}[theorem]{Proposition}
\newtheorem{classconj}[theorem]{Classification Conjecture}
\theoremstyle{definition}
\newtheorem{definition}[theorem]{Definition}
\newtheorem{remark}[theorem]{Remark}
\numberwithin{equation}{section}
\newcommand{\K}{\mathsf{k}}
\newcommand{\cF}{\mathcal F}
\def\Nz{\mathbb{N}}
\def\Qz{\mathbb{Q}}
\def\Zz{\mathbb{Z}}
\def\1z{\mathbb{1}}
\newcommand{\Ad}{\textup{Ad}}
\newcommand{\id}{\textup{id}}
\newcommand{\End}{\textup{End}}
\newcommand{\gr}{\textup{gr}}
\newcommand{\Id}{\textup{Id}}
\DeclareMathOperator{\mult}{mult}
\newcommand\Gr[1][]{{\operatorname{{Gr}-}}}
\newcommand{\Modd}{\operatorname{Mod-}}
\subjclass[2020]{Primary: 16S88, 16W50, 19A49. Secondary: 37B10, 46L35}
\keywords{Hazrat Conjecture, Leavitt path algebras, Graded Morita equivalence, Shift Equivalence}
\begin{document}

\title[The Graded Classification Conjecture]{Unital aligned shift equivalence and the graded classification conjecture for Leavitt path algebras}

\author[K. A. Brix]{Kevin Aguyar Brix}
\address[K. A. Brix]{Department of Mathematics and Computer Science, University of Southern Denmark, 5230 Odense, Denmark}
\email{kabrix@imada.sdu.dk}

\author[A. Dor-On]{Adam Dor-On}
\address[A. Dor-On]{Department of Mathematics, University of Haifa, Mount Carmel, Haifa 3103301, Israel}
\email{adoron.math@gmail.com}

\author[R. Hazrat]{Roozbeh Hazrat}
\address[R. Hazrat]{Centre for Research in Mathematics and Data Sceince,  Western Sydney University, Australia}
\email{r.hazrat@westernsydney.edu.au}

\author[E. Ruiz]{Efren Ruiz}
\address[E. Ruiz]{Department of Mathematics, University of Hawaii, Hilo, 200 W. Kawili St., Hilo, Hawaii,
96720-4091 U.S.A.}
\email{ruize@hawaii.edu}

\thanks{This work was initiated while all four authors were attending the workshop \emph{Combinatorial *-algebras} at the Mathematisches Forschungsinstitut Oberwolfach. 
Brix was supported by a Starting Grant from the Swedish Research Council (2023–-03315) and a Reintegration Fellowship from the Carlsberg Foundation (CF23--1328). 
Dor-On was partially supported by an NSF-BSF grant no. 2530543 / 2023695 (respectively), Horizon Marie-Curie SE project no. 101086394 and a DFG Middle-Eastern collaboration project no. 529300231. 
Hazrat acknowledges Australian Research Council grant DP230103184. 
Ruiz was partially supported by the Simons Foundation and by the Mathematisches Forschungsinstitut Oberwolfach.}
\date{\today}

\begin{abstract}
We prove that a unital shift equivalence induces a graded isomorphism of Leavitt path algebras when the shift equivalence satisfies an alignment condition.
This yields another step towards confirming the Graded Classification Conjecture. 
Our proof uses the bridging bimodule developed by Abrams, the fourth-named author and Tomforde, as well as a general lifting result for graded rings that we establish here.
This general result also allows us to provide simplified proofs of two important recent results: one independently proven by Arnone and Va{\v s} through other means that the graded $K$-theory functor is full, and the other proven by Arnone and Corti\~nas that there is no unital graded homomorphism between a Leavitt algebra and the path algebra of a Cuntz splice.
\end{abstract}

\maketitle

\section{Introduction}
Subshifts of finite type are at the heart of dynamics, appearing naturally as a way of modeling a myriad of dynamical phenomena in e.g. topological quantum field theory, ergodic theory, statistical mechanics and coding and information theory \cite{Lind-Marcus2021}.
The most fundamental problem in symbolic dynamics is the classification of subshifts of finite type up to topological conjugacy, dating back to the seminal work of Williams \cite{Williams1973}. This conjugacy problem may be recast into the algebraic problem of strong shift equivalence for adjacency matrices:
two square matrices $A$ and $B$ are elementary strong shift equivalent if there are non-negative rectangular matrices $R$ and $S$ such that $A = RS$ and $SR = B$;
they are strong shift equivalent if there are $n\in \Nz$ and square matrices $A = A_0,\ldots,A_n = B$ such that $A_i$ and $A_{i+1}$ are elementary strong shift equivalent for every $i < n$.
To this day, it is still not known if strong shift equivalence is decidable.

A much more tractable and closely related notion is that of shift equivalence, also introduced by Williams:
two square matrices $A$ and $B$ are shift equivalent if there are $m\in \Nz$ and non-negative rectangular matrices such that
\[
A^m = RS, \quad AR = RB, \quad BS = SA, \quad B^m = SR.
\]
It was orginally believed that the two notions would coincide, but after 20 years, Kim and Roush found various counterexamples \cite{Kim-Roush1992,Kim-Roush1999,Kim-RoushW1999}.
Understanding the fine difference between the two relations remains an active area of research with the ultimate goal of shedding light on the conjugacy problem above.

Leavitt path algebras were introduced in the 2000's~\cite{Lpabook}, and for every field $\K$ they assigns a $\K$-algebra $L_\K(E)$ to a directed graph $E$.  
Since directed graphs model subshifts of finite type, it was observed early on that strong shift equivalence between matrices induces an equivalence between the categories of graded modules over the corresponding Leavitt path algebras~\cite{HazaratDyn2013,Ara-Pardo}.
Today, one of the main conjectures in this area purports that mere shift equivalence of matrices translates into graded equivalence of the categories of graded modules of Leavitt path algebras.
This provides an important link between the conjugacy problem in dynamics and the following central classification conjecture for non-commutative algebras~\cite{Hazrat2013,HazaratDyn2013,willie}.

\begin{classconj}\label{gcc2013}
Let $E$ and $F$ be finite graphs with essential adjacency matrices $A$ and $B$, respectively. 
The following are equivalent.

\begin{enumerate}[\upshape(1)]

\item \label{cc:1} The matrices $A$ and $B$ are shift equivalent;

\item \label{cc:2} There is an order-preserving $\mathbb Z[x,x^{-1}]$-module isomorphism  $\theta\colon K_0^{\gr}(L_\K(E)) \to K_0^{\gr}(L_\K(F))$; 

\item \label{cc:3} The graded module categories $\Gr L_\K(E)$ and $\Gr L_\K(F)$ are graded Morita equivalent. 
\end{enumerate}

Furthermore, if in condition \ref{cc:2}, the map $\theta$ is pointed, i.e.,  $\theta([L_\K(E)])=[L_\K(F)]$, this is equivalent to the replacement of condition \ref{cc:3} with $L_\K(E)$ and $L_\K(F)$ being graded isomorphic. 
\end{classconj}

The equivalence of \ref{cc:1} and \ref{cc:2} above was established in \cite{HazaratDyn2013, Ara-Pardo}. 
Ara and Pardo also show that if the pointed isomorphism $\theta\colon K_0^{\gr}(L_\K(E)) \to K_0^{\gr}(L_\K(F))$ arises from a strong shift equivalence, then the Leavitt path algebras are graded isomorphic. 

In this note, we establish the unital/pointed version of the implication \ref{cc:1} to \ref{cc:3} when the shift equivalence satisfies a particular alignment condition.
Our methods use advances from \cite{Abrams-Ruiz-Tomforde2023} which followed recent work in \cite{CDE24} in the C*-setting, as well as general lifting results for graded rings that we establish in Section \ref{sec:pointed-graded-ME}.
Our categorical approach also allows us to give another proof of a conjecture made by the third-named author in \cite{Hazrat2013}, predicting that the pointed graded Grothendieck group $K_0^{\gr}$ is full, i.e., any order-preserving, pointed, $\Zz[x,x^{-1}]$-module homomorphism $\alpha \colon K_0^{\mathrm{gr}} (L_\K(E)) \to K_0^{\mathrm{gr}} (L_\K(F))$ is induced by a unital graded homomorphism $\psi \colon L_\K(E) \to L_\K(F)$. 
This provides a new and conceptual approach that significantly simplifies the proofs compared to Arnone \cite{Arnone2023} and Va{\v s} \cite{Vas2023}.
Finally, we also provide a short proof of a result recently established by Arnone and Corti\~nas \cite{ArnoneCor23} that there are no unital graded homomorphisms between a Leavitt algebra $L_n$ and the path algebra of a Cuntz splice of a single vertex with $n$ loops.
It is likely our methods will be relevant for future investigations related to other pertinent problems in the area.

In Section \ref{sec:basics} we address preliminaries and notation, and in Section \ref{sec:pointed-graded-ME} we introduce pointed graded Morita equivalence and establish a general lifting result for graded rings.
We then apply this general result in Section \ref{sec:Ktheory} to give a new and conceptually clean proof of the fullness conjecture, as well as the non existence of unital graded homomorphism between a Leavitt algebra and the Leavitt path algebra of a Cuntz splice. Finally, in Section \ref{sec:aligned} we show that a unital shift equivalence satisfying an alignment condition induces a graded isomorphism of Leavitt path algebras.

\section{Basic definitions} \label{sec:basics}

\subsection{Symbolic dynamics} \label{sec:symbolic-dynamics}
We recommend \cite{Lind-Marcus2021} as an introduction to symbolic dynamics.

Let $A$ be a square matrix over $\Nz$ and let $|A|$ be the size of $A$.
The \emph{dimension group} of $A$, or of the subshift it determines, is the inductive limit of the stationary inductive system given by $A^t$ as a linear map on $\Zz^{|A|}$, i.e., 
\begin{equation}\label{thuluncht}
\mathbb Z^{|A|} \stackrel{A^t}{\longrightarrow} \mathbb Z^{|A|} \stackrel{A^t}{\longrightarrow}  \mathbb Z^{|A|} \stackrel{A^t}{\longrightarrow} \cdots.
\end{equation}

As a group, this is isomorphic to 
\begin{equation}\label{dimgroup1}
    G_A \coloneqq (\Zz^{|A|}\times \Nz)/ \sim_{A^t}
\end{equation}
 where $(v,k) \sim_{A^t} (w,l)$  if $(A^t)^{m-k} v = (A^t)^{m-l} w$, for some $m\in \Nz$. Let $[v,k]$ denote the equivalence class of $(v,k)$. Clearly $[(A^t)^n v,n+k]=[v,k]$. Then, it is not difficult to show that the direct limit of the system \eqref{thuluncht} is the abelian group consists of equivalence classes $[v,k]$, $v\in \mathbb Z^{|A|}$, $k \in \mathbb N$, with addition defined by
\[[v,k]+[w,k']=[(A^t)^{k'}v+(A^t)^kw,k+k'].\]
  
The positive cone in $G_A$ is $G_A^+ \coloneqq \big \{ [v,k] \in G_A : v\in \Nz^{|A|} \big \}$, and multiplication by $A^t$ defines an automorphism $\theta_A$ on $G_A$ that preserves the positive cone.
The class of the unit $u_A \coloneqq [(1,\ldots,1), 0]$ is an order unit in $G_A$.
Alternatively (see \cite[Section 7.4]{Lind-Marcus2021}), the dimension group is defined in terms of the eventual image 
\begin{equation}\label{dimgroup2}
\Delta_A \coloneqq \Big \{ v\in \bigcap_{k=1}^\infty \Qz^{|A|} A^k : v A^l\in \Zz^{|A|} \text{, for some } l\in \Nz \Big \}
\end{equation}
with positive cone $\Delta_A^+ \coloneqq \big \{ v\in \Delta_A : v A^l\in \Nz^{|A|} \text{, for some } l\in \Nz \big \}$,
and multiplication $v\mapsto vA$ defines an automorphism $\delta_A$ on $\Delta_A$ that preserves the positive cone.  The automorphism $\delta_A$ naturally makes $\Delta_A$ into a $\Zz[x,x^{-1}]$-module.  The triple 
$(\Delta_A , \Delta_A^+ , \delta_A)$
is called \emph{Krieger's dimension triple of $A$}.

The two pictures of the dimension data (\ref{dimgroup1}) and (\ref{dimgroup2}) are equivalent. Indeed, a concrete order $\Zz[x,x^{-1}]$-module isomorphism is given by 
\begin{align*}
\psi_A\colon \Delta_A &\longrightarrow G_A\\
v &\longmapsto [(A^t)^l v^t, l], 
\end{align*}
where $l\in \Nz$ is such that $vA^l\in \Zz^{|A|}$.
Note that $\psi_A$ is a unital isomorphism where the order unit in $\Delta_A$ is given by $u \in \bigcap_{k=1}^\infty \Qz^{|A|} A^k$ such that $(1,\ldots, 1) A^{|A|} = u A^{|A|}$.  The existence of $u$ follows from \cite[Remark~7.4.4]{Lind-Marcus2021}. 

Two square matrices $A$ and $B$ over $\Nz$ are \emph{shift equivalent} if there exist $m\in \Nz$ and rectangular matrices $R$ and $S$ over $\Nz$ satisfying the relations
\begin{equation} \label{eq:SE}
    A^m = R S, \quad A R = R B, \quad B^m = S R, \quad B S = S A.
\end{equation}

A shift equivalence implemented via $(R,S)$ then determines the isomorphisms, 
\begin{align}\label{thuemas}
R\colon \Delta_A &\longrightarrow \Delta_B & R^t\colon G_A&\longrightarrow G_B\\
 v&\longmapsto vR &    [v, k] &\longmapsto [R^t v, k]. \notag
\end{align}

A straightforward computation shows that both isomorphisms preserve the positive cones and the underlying $\mathbb{Z}[x,x^{-1}]$ structure, and moreover we have 
\begin{equation} \label{eq:R-R^t}
\psi_B\circ R = R^t\circ \psi_A.
\end{equation}
We say a shift equivalence implemented via $(R,S)$ is \emph{unital} if either $R$ or $S$ induce an isomorphism of the dimension data that also preserves the order unit, and in this case both isomorphisms induced by $R$ and $S$ preserves the order unit. This means, in the correspondence~(\ref{thuemas}) of the isomorphism between $G_A$ and $G_B$, that we have $R^t[(1,\dots,1),0]=[(1,\dots,1),0]$, which means that for some $l\in \mathbb N$,  
\[({B^t})^lR^t(1,\dots,1)_{1\times |A|}=({B^t})^l(1,\dots,1)_{1\times |B|}.\]

\subsection{Algebras}\label{algpre}
For the basic definitions needed from ring theory, we mainly follow  \cite{Hazrat2016}. Throughout the paper rings will have identities and modules are unitary in the sense that the identity of the ring acts as the identity operator on the module. 

Let $\Gamma$ be an abelian group.
A ring $R$ is \emph{$\Gamma$-graded} if $R$ decomposes as a direct sum $\bigoplus_{\gamma\in \Gamma} R_\gamma$ where each $R_\gamma$ is an additive subgroup of $R$ and $R_\gamma R_\eta \subseteq R_{\gamma+\eta}$, for all $\gamma,\eta\in \Gamma$. 

Let $R$ be a $\Gamma$-graded ring.
A right $R$-module $M$ is \emph{$\Gamma$-graded} if $M$ decomposes as a direct sum $\bigoplus_{\gamma \in \Gamma} M_\gamma$ where each $M_\gamma$ is an additive subgroup of $M$ and $M_\gamma R_\eta \subseteq M_{\gamma+\eta}$, for all $\gamma, \eta\in \Gamma$.
A graded left $R$-module is defined analogously. 
If $R$ and $S$ are $\Gamma$-graded rings, then an $R - S$ bimodule $M$ is $\Gamma$-graded if $M = \bigoplus_{\gamma\in \Gamma} M_\gamma$ is both a graded left $R$-module and a graded right $S$-module.
In particular, $R_\gamma M_\eta S_\zeta \subseteq M_{\gamma+\eta+\zeta}$, for all $\gamma,\eta,\zeta\in \Gamma$.
If $M$ is a $\Gamma$-graded $R$-module and $\gamma\in \Gamma$, then the $\gamma$-shifted graded right $R$-module is $M(\gamma) \coloneq \bigoplus_{\eta\in \Gamma} M(\gamma)_\eta$ where $M(\gamma)_\eta = M_{\gamma+\eta}$, for all $\eta\in \Gamma$.

For a $\Gamma$-graded ring $R$, there is a graded isomorphism of rings given by the regular representation $\eta\colon R \to \End_R(R)$ 
which sends $r\in R$ to $\eta_r$ where $\eta_r(x) = rx$, for all $x\in R$, see e.g. \cite[proof of Theorem 2.3.7]{Hazrat2016}.

We write $\Modd R$ for the category of unitary right $R$-modules and module homomorphisms. If $R$ is $\Gamma$-graded, we denote by $\Gr R$ the category of unitary graded right $R$-modules with morphisms preserving the grading. For $\Gamma$-graded rings $R$ and $S$, a functor $\mathcal F \colon \Gr R \to \Gr S$ is called \emph{graded} if $\mathcal F(M(\gamma))=\mathcal F(M)(\gamma)$ for all 
$\gamma \in \Gamma$.  For $\gamma\in\Gamma$, the \emph{shift functor}
\begin{equation}\label{shiftshift}
\mathcal{T}_{\gamma}\colon \Gr R\longrightarrow \Gr R,\quad M\mapsto M(\gamma)
\end{equation}
is an isomorphism with the property that $\mathcal{T}_{\gamma}\mathcal{T}_{\eta}=\mathcal{T}_{\gamma+\eta}$, for $\gamma,\eta\in\Gamma$.

For a $\Gamma$-graded ring $R$, recall that $K_0^\gr(R)$ is the \emph{graded Grothendieck group}, which is the group completion of the  monoid consisting of the graded isomorphisms classes of finitely generated $\Gamma$-graded  projective $R$-modules equipped with the direct sum operation \cite[Section 3.1.2]{Hazrat2016}.
This group is a $\Zz[\Gamma]$-module with a structure induced from the $\Gamma$-module structure on the monoid. More specifically, if $\gamma \in \Gamma$, then for $[M]\in K_0^{\gr}(R)$ we have that $\gamma \cdot [M] \coloneq [\mathcal{T}_{\gamma}M]$ defines the action of $\Gamma$.
The class of $R$ itself defines an order unit in $K_0^\gr(R)$, and if $S$ is also a $\Gamma$-graded ring, we say that a homomorphism $K_0^\gr(R) \to K_0^\gr(S)$ is \emph{pointed (or unital)} if it takes $[R]$ to $[S]$.
Of particular interest to us is the case when $\Gamma = \Zz$ in which case $K_0^\gr(R)$ is a $\Zz[x,x^{-1}]$-module. Note that any unital graded homomorphism $\theta \colon R\to S$ of $\Zz$-graded rings $R$ and $S$, induces a pointed order-preserving $\mathbb Z[x,x^{-1}]$-module homomorphism $K_0^{\gr}(R)\to K_0^{\gr}(S)$. 

\subsection{Leavitt path algebras}

We refer the reader to \cite{Lpabook} for the definition of Leavitt path algebras and the standard terminologies. We provide some definitions for the sake of notation.

Let $E = (E^0,E^1)$ be a directed graph and $\K$ a field. The \emph{Leavitt path algebra} $L_{\K}(E)$ is the universal $\K$-algebra generated by formal elements $\{ \ v \ | \ v \in E^0 \}$ and $\{ \ e,e^* \ | \ e\in E^1 \ \}$ subject to the relations
\begin{itemize}
    \item $v^2 = v$, and $vw = 0$ if $v\neq w$ are in $E^0$
    \item $s(e) e = e r(e) = e, \ r(e)e^* = e^*s(e) = e^*$ for all $e\in E^0$
    \item $e^*e = r(e)$ for $e\in E^1$
    \item $v= \sum_{s(e) = v}ee^*$ for $v$ which is not a sink and not an infinite emitter.
\end{itemize}
Families of elements in a $\K$-algebra satisfying the above relations are called \emph{Cuntz--Krieger} $E$-families.

\subsection{Krieger's dimension triple and graded K-theory}

Let $\K$ be a field, and let $E$ be a finite graph with no sinks. In this case, $K_0^\gr(L_\K(E))$ is generated by the isomorphism classes of the form $[vL_\K(E)]$ for $v\in E^0$.
We describe the isomorphism between $K_0^\gr ( L_\K(E))$ and the dimension group $\Delta_{E}$ as proved in \cite{HazaratDyn2013} (see also \cite{Ara-Pardo}).  First, note that in \cite{HazaratDyn2013}, for a square adjacency matrix $A$, the group $\Delta_A$ was defined with $A$ acting on the left, whereas we chose the convention in \cite{Lind-Marcus2021} for which the matrix $A$ acts on the right. Suppose that $A$ is the adjacency matrix representing the graph $E$. The proof of \cite[Lemma~11]{HazaratDyn2013} gives an order $\Zz[x,x^{-1}]$-module isomorphism $\beta_E \colon K_0^\mathrm{gr} (L_\K(E)) \to G_{A}$ such that $\beta_{E} ( [ v L_\K(E) ] ) = \psi_{A}( [v, 0 ])$ for all $v \in E^0$. Thus $\beta_E$ preserves the order units, i.e., $\beta_E([L_\K(E)])=[(1,\dots,1),0]$. Consequently, we get an order $\Zz[x,x^{-1}]$-module isomorphism $\psi_{A} \circ \beta_E \colon K_0^\mathrm{gr} (L_\K(E)) \to \Delta_{A}$. 

Any shift equivalence implemented via $(R, S)$ between the adjacency matrices $A$ and $B$ of the graphs $E$ and $F$ (respectively) induce a unique order $\Zz[x,x^{-1}]$-module isomorphism $\alpha \colon K_0^\mathrm{gr} (L_\K(E)) \to K_0^\mathrm{gr} (L_\K(F))$ making the diagram
\begin{equation}\label{eq-kthy-shift}
\begin{tikzcd}
    K_0^\gr(L_\K(E)) \arrow[r, "\alpha"] \arrow[d, "\beta_E"]  & K_0^\gr(L_\K(F)) \arrow[d,"\beta_F"] \\
    G_{A} \arrow[r,"R^t"]  \arrow[d, "\psi_{A}^{-1}"] & G_{B} \arrow[d, "\psi_{B}^{-1}"]\\
    \Delta_{A} \arrow[r, "R"] & \Delta_{B}.
\end{tikzcd}
\end{equation}
commutative.  In addition, Krieger's Theorem \cite{Kr-shift} (see also \cite[Theorem 6.4]{Effros1979}), we get that for finite graphs $E$ and $F$ with no sinks, if $\alpha \colon K_0^\mathrm{gr} (L_\K(E)) \to K_0^\mathrm{gr} (L_\K(F))$ is an order $\Zz[x,x^{-1}]$-module isomorphism, then there exists a shift equivalence implemented via $(R, S)$ between $A$ and $B$ of the graphs $E$ and $F$ such that modulo the $\Zz[x,x^{-1}]$-action, $R$ induces $\alpha$.  For $n \geq 1$, as the shift equivalence $(A, A)$ induces the $\Zz[x,x^{-1}]$-action on $K_0^\mathrm{gr}(L_\K(E))$, we see that $\alpha$ is pointed if and only if there exists a shift equivalence $(R,S)$ that is unital.

\section{Pointed graded Morita equivalence} \label{sec:pointed-graded-ME}

Let $\Gamma$ be an abelian group, and let $R$ and $S$ be $\Gamma$-graded rings.
A $\Gamma$-graded $R-S$ bimodule $M$ determines a functor $\cF_M \coloneqq -\otimes_R M\colon\Gr R\longrightarrow \Gr S$ between categories of graded modules, which in particular satisfies for all $f\in \End_R(R)$ that $\cF_M(f) = f\otimes \id_M \in \End_S(R\otimes M)$. We start with the following proposition.

\begin{proposition}
\label{prop:ring-lift}
Let $M$ be a graded $R - S$ bimodule, and assume that $M$ and $S$ are graded isomorphic as graded right $S$-modules.
Then, there exists a graded unital homomorphism of rings $\xi \colon R \to S$ such that $K_0^\gr(\xi) = K_0^\gr(\cF_M)$.
Moreover, if $\cF_M$ is a graded equivalence, then $\xi\colon R\to S$ is a graded isomorphism of rings.
\end{proposition}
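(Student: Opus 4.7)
The plan is to use the graded right $S$-module isomorphism $\phi \colon S \to M$ to transport the left $R$-action on $M$ into a ring homomorphism $R \to S$. First, the left $R$-action on $M$ yields a graded unital ring homomorphism $\lambda \colon R \to \End_S(M)$, $r \mapsto (m \mapsto rm)$, into the graded right $S$-module endomorphisms of $M$. Conjugation by $\phi$ gives a graded ring isomorphism $\phi_* \colon \End_S(S) \to \End_S(M)$, $f \mapsto \phi \circ f \circ \phi^{-1}$. Composing with the regular representation $\eta \colon S \to \End_S(S)$ recalled in the preliminaries, I would define
\[
\xi \colon R \xrightarrow{\lambda} \End_S(M) \xrightarrow{\phi_*^{-1}} \End_S(S) \xrightarrow{\eta^{-1}} S,
\]
which is automatically a graded unital ring homomorphism. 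Setting $m_0 \coloneqq \phi(1_S) \in M_0$, a direct computation shows that $\xi$ is characterized by the identity $r m_0 = m_0 \xi(r)$ for every $r \in R$.

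Next I would verify that $\phi$ becomes an isomorphism of graded $R$--$S$ bimodules between the twisted bimodule ${}_\xi S_S$ (namely, $S$ with its usual right $S$-action and with left $R$-action $r \cdot s \coloneqq \xi(r) s$) and $M$. Right $S$-linearity and grading preservation are built into $\phi$, while $\phi(\xi(r) s) = m_0 \xi(r) s = r m_0 s = r \phi(s)$ supplies left $R$-linearity. Since $- \otimes_R {}_\xi S_S$ is the standard base-change functor associated to $\xi$, the resulting natural isomorphism $\cF_M \cong - \otimes_R {}_\xi S_S$ yields $K_0^\gr(\xi) = K_0^\gr(\cF_M)$ on graded $K$-theory.

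For the moreover clause, suppose $\cF_M$ is a graded equivalence. Then, as is standard for any equivalence of categories, the functor induces a graded ring isomorphism on graded endomorphism rings $\End_R(R) \to \End_S(\cF_M(R))$, $f \mapsto f \otimes \id_M$. Using the natural identification $\cF_M(R) = R \otimes_R M \cong M$ via $r \otimes m \mapsto rm$, and tracing through the regular representations for $R$ and $S$ together with $\phi_*$, this isomorphism is identified with $\xi$, so $\xi$ is a graded ring isomorphism. The main thing to keep track of throughout is that every identification respects the $\Gamma$-grading; this is essentially automatic since $m_0$ lies in degree zero and $\phi$, $\eta$, and $\lambda$ are all graded, so I expect no serious obstacle beyond careful bookkeeping.
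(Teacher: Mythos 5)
Your proposal is correct and takes essentially the same approach as the paper: you construct the same homomorphism $\xi(r)=\phi^{-1}\bigl(r\,\phi(1_S)\bigr)$ by transporting the left $R$-action on $M$ through endomorphism rings and the regular representation of $S$, and you identify the induced maps on $K_0^{\gr}$ via the same bimodule isomorphism ${}_\xi S\cong M$. The only cosmetic difference is that you package the first step as the action map $\lambda\colon R\to\End_S(M)$ rather than as $\cF_M$ applied to $\End_R(R)$ followed by the identification $R\otimes_R M\cong M$, which amounts to the same thing.
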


\begin{proof} 
Let $\psi\colon M \to S$ be a graded isomorphism of right $S$-modules.
Then, $S$ naturally inherits a left $R$-module structure and, in particular $X\otimes _R M \cong_\gr X\otimes_R S$ whenever $X$ is a finitely generated graded projective right $R$-module.
Consequently, the functor $\cF_M$ induces a pointed homomorphism $K_0^\gr(\cF_M) \colon K_0^\gr(R) \to K_0^\gr(S)$ given by $K_0^\gr(\cF_M)[X] = [X\otimes_R M]$, for all $[X]\in K_0^\gr(R)$.

Recall that multiplication defines a graded isomorphism $\mult\colon R\otimes_R M \to M$ as right $S$-modules.
We now have a graded homomorphism of rings $\xi\colon R \to S$ given as the composition
\[
\xi\colon R\overset{\eta^R}{\longrightarrow} \End_R(R) \overset{\cF_M}{\longrightarrow} \End_S(R\otimes_R M) \overset{\Ad(\mult)}{\longrightarrow} \End_S(M) \overset{\Ad(\psi)}{\longrightarrow} \End_S(S) 
\overset{(\eta^S)^{-1}}{\longrightarrow} S,
\]
where $\eta^R$ and $\eta^S$ are left regular representations of the rings on themselves.  As each map is unit-preserving, $\xi$ is a graded unital homomorphism.  This map induces a pointed homomorphism of graded $K$-theory $K_0^\gr(\xi)$, and we show now that this coincides with $K_0^\gr(\cF_M)$.
Concretely, this means that $r\in R$ is mapped to $\psi\circ \mult\circ (\eta_r\otimes \Id_M)\circ \mult^{-1}\circ \psi^{-1}$ in $\End_S(S)$; and
via $\eta^S$, we know that the latter is equal to $\eta_t^S$ for some $t\in S$, so that $\xi(r) = t$.

The homomorphism $\xi$ determines a left $R$-module structure on $S$ given by $r.s \coloneqq \xi(r)s$, for all $r\in R$ and $s\in S$,
and the homomorphism on graded K-theory induced from $\xi$ is given by the functor $-\otimes_{\xi} S$. 
We will show that, when $S$ is equipped with the left $R$-module structure as above, $\psi\colon M \to S$ is in fact an $R-S$ bimodule isomorphism,
and this will in turn imply that the functors $-\otimes_R M$ and $-\otimes_\xi S$ define the same homomorphism on graded K-theory.
Concretely, we are only left with showing that $\psi(r.m) = \xi(r)\psi(m)$ for all $r\in R$ and $m\in M$. 

Fix $r\in R$ and $m\in M$, pick $t\in S$ such that $\xi(r) = t$, and observe that since $\mult^{-1}(m) = 1\otimes m$,
\[
\xi(r)\psi(m) = \eta_t^S(\psi(m)) = \psi\circ \mult\circ (\eta_r^R\otimes \Id_M)(1\otimes m) = \psi(r. m).
\]
Finally, if $\cF_M$ is a graded equivalence, then it is invertible and the homomorphism $\xi$ is also invertible.
\end{proof}

The above inspired us to make the following natural definition of a \emph{pointed (or unital)} graded Morita equivalence of rings.
See also \cite[Example 3.1]{Ara-Pardo}.

\begin{definition}
  Let $R$ and $S$ be $\Gamma$-graded rings.
  We say $R$ and $S$ are \emph{pointed graded Morita equivalent} if there is a graded equivalence $\phi\colon\Gr R \rightarrow \Gr S$ such that $\phi(R)$ and $S$ are isomorphic as graded right $S$-modules.
\end{definition}

\begin{proposition} \label{prop:unital-graded-ME}
  Let $R$ and $S$ be $\Gamma$-graded rings.
  If $R$ and $S$ are pointed graded Morita equivalent, then $R$ and $S$ are graded isomorphic as rings.
\end{proposition}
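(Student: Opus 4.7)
The strategy is to extract a graded $R$-$S$ bimodule $M$ from the equivalence $\phi$ and then apply Proposition~\ref{prop:ring-lift}. Set $M := \phi(R)$, which is a graded right $S$-module that, by the pointedness hypothesis, is graded isomorphic to $S$. A compatible left $R$-action is produced by applying $\phi$ to the left regular representation $\eta^R \colon R \to \End_R(R)$: since $\phi$ is a graded additive functor, it yields a graded ring homomorphism $\End_R(R) \to \End_S(\phi(R)) = \End_S(M)$, and $\End_S(M)$ acts on $M$ from the left. Composing with $\eta^R$ equips $M$ with the structure of a graded $R$-$S$ bimodule.

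Next, I would establish that the associated tensor product functor $\cF_M = -\otimes_R M \colon \Gr R \to \Gr S$ is itself a graded equivalence by identifying it (up to natural isomorphism) with $\phi$. This is a graded Eilenberg--Watts-type statement: $\phi$ commutes with direct sums and with the shift functors $\cT_\gamma$, so it is determined by its values on the generators $R(\gamma)$ of $\Gr R$; on these, both $\phi$ and $\cF_M$ send $R(\gamma)$ to $M(\gamma)$, and one checks that the natural transformation is compatible with the bimodule action constructed above. Since $\phi$ is a graded equivalence by hypothesis, so is $\cF_M$.

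With $M$ in hand satisfying both hypotheses of Proposition~\ref{prop:ring-lift} (namely $M \cong_{\mathrm{gr}} S$ as graded right $S$-modules, and $\cF_M$ a graded equivalence), the ``moreover'' clause of that proposition produces the desired graded unital ring isomorphism $\xi \colon R \to S$.

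The main obstacle is the verification in the middle step that the natural functor $\cF_M$ really coincides with $\phi$ (not merely agrees on $R$), and in particular that the left $R$-module structure on $M$ coming from the regular representation is the ``correct'' one for this identification. The rest of the argument is essentially a bookkeeping reduction to Proposition~\ref{prop:ring-lift}. If a fully abstract Watts-type identification proves awkward, a safer alternative is to bypass it and only argue directly that $\cF_M$ is a graded equivalence, which is all that Proposition~\ref{prop:ring-lift} actually requires: one can show $\cF_M(R(\gamma)) \cong \phi(R(\gamma))$ functorially and leverage that both $\phi$ and $\cF_M$ preserve all colimits and shifts to conclude essential surjectivity and fully-faithfulness of $\cF_M$ from those of $\phi$.
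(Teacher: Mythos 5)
Your proof is correct, but it takes a genuinely longer route than the paper's. The paper disposes of the statement in three lines: a graded equivalence $\phi$ induces a graded isomorphism of graded rings $\End_R(R) \to \End_S(\phi(R))$ (as in the proof of \cite[Theorem 2.3.7]{Hazrat2016}), the pointedness hypothesis gives $\End_S(\phi(R)) \cong \End_S(S)$ as graded rings, and the left regular representations identify $\End_R(R)$ with $R$ and $\End_S(S)$ with $S$; composing yields the desired graded isomorphism. No bimodule, no tensor functor, and no Eilenberg--Watts identification is needed. Your route --- extract $M = \phi(R)$ as a graded $R$-$S$ bimodule, prove $\phi \cong -\otimes_R M$ by a graded Watts argument, and invoke the ``moreover'' clause of Proposition~\ref{prop:ring-lift} --- is also valid: an equivalence preserves all colimits and commutes with the shift functors, so the Watts identification (which the paper itself carries out later, in the proof of Theorem~\ref{homfunlift}(1)) applies, and the left $R$-action you place on $\phi(R)$ via the regular representation is exactly the one that argument requires. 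What your approach buys is the extra conclusion that the resulting isomorphism $\xi$ satisfies $K_0^{\gr}(\xi) = K_0^{\gr}(\cF_M) = K_0^{\gr}(\phi)$, i.e., it induces the same map on graded $K$-theory as the given equivalence --- information the paper's short argument does not record in this proposition and instead recovers in the corollary that follows. The cost is that the step you rightly flag as the main obstacle, identifying $\cF_M$ with $\phi$ and not merely matching their values on $R$, genuinely requires the full Watts machinery, whereas the paper's argument needs only the functoriality of endomorphism rings under an equivalence.
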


\begin{proof}
  The left regular representation determines graded isomorphisms as graded rings $R\cong \End_R(R)$ and $S \cong \End_S(S)$.
  Choose a graded equivalence $\phi$ from Gr-$R$ to Gr-$S$ such that $\phi(R)$ and $S$ are graded isomorphic as graded $S$-modules.
  As it is explained in the proof of \cite[Theorem 2.3.7]{Hazrat2016}, $\phi$ implements a graded isomorphism of graded rings from $\End_R(R)$ to $\End_S(\phi(R))$.
  By our hypothesis, the latter is graded isomorphic as graded rings to $\End_S(S)$, and so we are done.
\end{proof}

Now consider $\Gamma$-graded rings $R$ and $S$ and a $\Gamma$-graded $R-S$ bimodule $M$ such that the graded functor $\cF_M \colon \Gr R \to \Gr S$ is a graded equivalence as in \cite[Definition~2.3.3(2)]{Hazrat2016}.  Then $\mathcal{F}_M$ implements an isomorphism on graded $K$-theory
\[
K_0(\cF_M)\colon K_0^\gr(R) \to K_0^\gr(S),
\]
where $K_0(\cF_M)[X] = [X\otimes_R M]$. By assumption we always have a graded isomorphism $M\cong R\otimes_R M$, and if $M$ implements a pointed graded Morita equivalence (as above), then $S$ and $M$ are graded isomorphic as graded $S$-modules, so $K_0(\cF_M)$ is a pointed isomorphism in the sense that $[R]$ is mapped to $[S]$. Combining this with Proposition~\ref{prop:ring-lift}, we have the following corollary. 

\begin{corollary}
    Let $R$ and $S$ be $\Gamma$-graded rings.
    Suppose there is a graded $R - S$ bimodule $M$ such that $\cF_M \colon \Gr R \to \Gr S$ is a graded equivalence that induces a pointed isomorphism $K_0^\gr(R) \to K_0^\gr(S)$.
    Then, there is a graded isomorphism of rings $\xi\colon R \to S$ and, moreover,
    the induced map $K_0^\gr(\xi)\colon K_0^\gr(R) \to K_0^\gr(S)$ coincides with $K_0^{\gr}( \cF_M)$.
\end{corollary}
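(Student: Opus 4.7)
The plan is to deduce the corollary from Proposition~\ref{prop:ring-lift} by verifying its hypothesis, namely that $M$ and $S$ are graded isomorphic as graded right $S$-modules. Once this is established, the ``moreover'' clause of Proposition~\ref{prop:ring-lift}, combined with our assumption that $\cF_M$ is a graded equivalence, immediately produces a graded ring isomorphism $\xi \colon R \to S$ with $K_0^\gr(\xi) = K_0^\gr(\cF_M)$.

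First, I would observe that multiplication yields a graded isomorphism $R \otimes_R M \to M$ of right $S$-modules, so $\cF_M(R) \cong M$ as graded right $S$-modules. Since $\cF_M$ is a graded equivalence and $R$ is a graded progenerator of $\Gr R$, the module $M$ is itself a graded progenerator of $\Gr S$, and in particular a finitely generated graded projective right $S$-module. Translating the pointed hypothesis on $K_0^\gr(\cF_M)$ then yields
\[
[S] \;=\; K_0^\gr(\cF_M)([R]) \;=\; [R \otimes_R M] \;=\; [M] \qquad \text{in } K_0^\gr(S).
\]

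The main obstacle I expect is promoting this equality in $K_0^\gr(S)$ to a genuine graded right $S$-module isomorphism $M \cong S$, since equality in $K_0^\gr$ records only stable isomorphism a priori. Here I would use the extra structure coming from $\cF_M$ being a graded equivalence: $M$ is a graded invertible bimodule, with an inverse bimodule $N$ satisfying $M \otimes_S N \cong R$ and $N \otimes_R M \cong S$ as graded bimodules. Combined with the pointed condition $[M] = [S]$, this invertibility should produce the required graded $S$-module isomorphism $M \cong S$, invoking, if necessary, cancellation among graded progenerators in the Morita-equivalent categories under consideration. Once this isomorphism is secured, Proposition~\ref{prop:ring-lift} applies directly and delivers a graded ring isomorphism $\xi \colon R \to S$ with $K_0^\gr(\xi) = K_0^\gr(\cF_M)$, completing the argument.
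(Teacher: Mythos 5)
Your strategy is the paper's: verify the hypothesis of Proposition~\ref{prop:ring-lift}, namely that $M\cong\cF_M(R)$ is graded isomorphic to $S$ as a graded right $S$-module, and then let that proposition produce the graded ring isomorphism $\xi$ with $K_0^\gr(\xi)=K_0^\gr(\cF_M)$. The first half of your argument --- multiplication gives $\cF_M(R)\cong M$, the equivalence forces $M$ to be a finitely generated graded projective right $S$-module, and pointedness gives $[M]=[S]$ in $K_0^\gr(S)$ --- is correct and is exactly the reduction the paper intends.

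The gap is the step you yourself single out, and your proposed fix does not close it. The equality $[M]=[S]$ in $K_0^\gr(S)$ only says that $M\oplus P\cong_{\gr}S\oplus P$ for some finitely generated graded projective $P$; passing from this to $M\cong_{\gr}S$ is precisely a cancellation statement, and cancellation of finitely generated projectives fails for general rings (stably free rank-one projectives, even progenerators, need not be free --- already the first Weyl algebra has stably free, non-free right ideals in the ungraded setting). Invertibility of the bimodule $M$ does not rescue this for noncommutative rings: there is no determinant/Picard-group splitting of $K_0$ available as there is over commutative rings. So the sentence ``this invertibility should produce the required graded $S$-module isomorphism $M \cong S$, invoking, if necessary, cancellation among graded progenerators'' assumes exactly what has to be proved. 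To be fair, the paper is equally terse here: the paragraph preceding the corollary only proves the reverse implication (if $M\cong_{\gr}S$ as right $S$-modules, i.e.\ the equivalence is pointed in the Morita sense, then $K_0^\gr(\cF_M)$ is pointed), and the corollary is evidently meant to be applied either where that stronger hypothesis holds or where graded cancellation is available --- as it is for Leavitt path algebras of finite graphs with no sinks, since graded finitely generated projectives there correspond to projectives over the ultramatricial zero-component, which is unit-regular. For your proof to stand on its own you must either add such a cancellation hypothesis explicitly or strengthen the assumption to $\cF_M(R)\cong_{\gr}S$.
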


\section{Lifting graded K-theory homomorphisms} \label{sec:Ktheory}

In \cite{Hazrat2013}, the third-named author conjectured that every pointed, order module homomorphism between the graded $K$-theories of two Leavitt path algebras over finite graphs is induced by a unital graded homomorphism between the Leavitt path algebras. This conjecture was independently confirmed by Arnone \cite{Arnone2023} and Va{\v s} \cite{Vas2023}. In fact, they proved that such lifts exists for a more general class of graphs: Arnone showed the lifts can be $*$-diagonal-preserving and holds for all finite graphs and Va{\v s} showed it lifts exist for all vertex-finite countable (possibly infinite) graphs.  Here, by lifting a homomorphism of graded $K$-theory to the category of graded modules, we show that an order module homomorphism between graded $K$-theories lifts to a graded homomorphism between the Leavitt path algebras.   
Our methods are very different from those of Arnone and Va{\v s} and rely on the idea of a \emph{bridging bimodule} as is used in \cite[Definition~4.12]{Abrams-Ruiz-Tomforde2023}, which were recently used by Bilich together with the second and fourth named authors in the C*-setting in \cite[Section 6]{BDR+}. These ideas go back to work of Meyer and Sehnem and their Cuntz--Pimsner homomorphism in the context of bicategories of C*-correspondences \cite{MS19}. 

\begin{lemma}\label{ARRB}
Let $E$ and $F$ be finite graphs with no sinks and with adjacency matrices $A$ and $B$, respectively.  
\begin{enumerate}[\upshape(1)]
\item Suppose there is a non-negative integral matrix $R$ such that $AR=RB$.  Then, the maps $[v,k]  \in G_A \mapsto [ R^t v , k ] \in G_B$ and $w \in \Delta_A \mapsto wR \in \Delta_B$  are order-preserving $\mathbb Z[x,x^{-1}]$-module homomorphisms from $G_A$ to $G_B$ and $\Delta_A$ to $\Delta_B$ making the diagram 
\begin{equation}\label{eq-kthy}
\begin{tikzcd}
    \Delta_{E} \arrow[r, "R"] \arrow[d, "\psi_{E}"]  & \Delta_{F} \arrow[d, "\psi_{F}"]\\
        G_E \arrow[r,"R^t"]  & G_F 
\end{tikzcd}
\end{equation}
commute.  Consequently, there exists a unique order-preserving $\mathbb Z[x,x^{-1}]$-module homomorphism $\alpha_R$ from $K^{\gr}_0(L_\K(E))$ to $K^{\gr}_0(L_\K(F))$ making \eqref{eq-kthy-shift} commute.  We call $\alpha_R$ the homomorphism induced by $R$.

\item Suppose  there exists an order-preserving $\mathbb Z[x,x^{-1}]$-module homomorphism $\alpha$ from $K^{\gr}_0(L_\K(E))$ to $K^{\gr}_0(L_\K(F))$.  Then, there is a non-negative integral matrix $R$ and there exists $r \in \Zz$ such that $AR=RB$ and
\[
\alpha (w) = {}^{x^r} \alpha_R(w)
\]
for all $w \in K^{\gr}_0(L_\K(E))$.
\end{enumerate}
\end{lemma}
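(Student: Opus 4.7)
The plan is to treat part (1) as a direct verification using the transposed identity $R^tA^t=B^tR^t$, and part (2) by evaluating the transferred map on the standard generators of $G_A$ and then clearing an ``eventual equality'' in the direct limit to produce an honest intertwining matrix.

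For part (1), transposing $AR=RB$ yields $R^tA^t=B^tR^t$. This immediately shows that the assignment $[v,k]\mapsto[R^tv,k]$ is well defined on $G_A$ (the equivalence $(v,k)\sim_{A^t}(v',k')$ is preserved), commutes with the $\Zz[x,x^{-1}]$-action (which is realised on $G_A$ and $G_B$ by $A^t$ and $B^t$), and preserves the positive cone because $R\ge 0$. An analogous direct check handles $w\mapsto wR$ on $\Delta_A\to\Delta_B$. Using the formula $\psi_A(v)=[(A^t)^lv^t,l]$ together with the identity $R^tv^t=(vR)^t$, commutativity of \eqref{eq-kthy} reduces to iterating $R^tA^t=B^tR^t$. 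Since $\beta_E,\beta_F,\psi_E,\psi_F$ are isomorphisms, a straightforward diagram chase in \eqref{eq-kthy-shift} produces the unique homomorphism $\alpha_R$.

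For part (2), transfer $\alpha$ through $\beta_E$ and $\beta_F$ to obtain an order-preserving $\Zz[x,x^{-1}]$-module map $\tilde\alpha\colon G_A\to G_B$. Evaluating on generators gives $\tilde\alpha([e_i,0])=[u_i,k_i]$ with $u_i\in\Nz^{|B|}$, and using $[(B^t)^sw,j+s]=[w,j]$ one brings these to a common level $K$, so that $\tilde\alpha([e_i,0])=[v_i,K]$ with $v_i\in\Nz^{|B|}$. The $\Zz[x,x^{-1}]$-linearity of $\tilde\alpha$ then forces
\[
\Big[\sum_{j} A_{ij}v_j,\,K\Big]=\tilde\alpha([A^te_i,0])=x\cdot[v_i,K]=[B^tv_i,K],
\]
which in the direct limit only means that $(B^t)^{n_i}\sum_j A_{ij}v_j=(B^t)^{n_i+1}v_i$ for some $n_i\ge 0$.

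The main obstacle is to upgrade this ``eventual equality'' to the honest matrix identity $AR=RB$. The remedy is to absorb the exponents: set $n=\max_i n_i$ and $w_i=(B^t)^nv_i\in\Nz^{|B|}$, so that $\sum_j A_{ij}w_j=B^tw_i$ for every $i$. Define $R$ to be the $|A|\times|B|$ non-negative integral matrix whose $i$-th row is $w_i^t$, i.e.\ $R^te_i=w_i$, so that the previous identity becomes $R^tA^t=B^tR^t$, i.e.\ $AR=RB$. Finally, using that $x$ acts as $B^t$ on $G_B$ (so that $[w,j+s]=x^{-s}[w,j]$), one computes
\[
\tilde\alpha([e_i,0])=[v_i,K]=x^{-K-n}[(B^t)^nv_i,0]=x^{-(K+n)}[R^te_i,0]=x^{-(K+n)}\alpha_R([e_i,0]).
\]
Since both sides are $\Zz[x,x^{-1}]$-module homomorphisms agreeing on the generators, we conclude that $\alpha(w)={}^{x^r}\alpha_R(w)$ with $r=-(K+n)$, as required.
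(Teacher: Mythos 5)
Your proposal is correct and takes essentially the same approach as the paper: part (1) is the same direct verification via $R^{t}A^{t}=B^{t}R^{t}$, and in part (2) both arguments transfer $\alpha$ to a map $G_A\to G_B$, evaluate on the generators $[e_i,0]$, normalize the representatives to a common level, and absorb the resulting eventual equality by multiplying by a suitable power of $B^{t}$ to obtain $R$ and the shift $r$. The only (immaterial) discrepancy is the sign of the exponent, where your $r=-(K+n)$ is the one consistent with the convention $[w,k+n]={}^{x^{-n}}[w,k]$, while the paper records $s+\ell$.
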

\begin{proof}
We first show (1).  Define the map 
\begin{align*}
\theta\colon (\mathbb Z^{|A|}\times \mathbb N)/ \sim_{A^t} &\longrightarrow (\mathbb Z^{|B|}\times \mathbb N)/ \sim_{B^t}\\
[v,k] &\longmapsto [R^tv,k].
\end{align*}
We check the map is well-defined and a module homomorphism. Suppose $[v,k]=[w,l]$. Then there is an $m\in \mathbb N$, such that ${(A^t)}^{m-k}v={(A^t)}^{m-l}w$. Then 
$R^t{(A^t)}^{m-k}v=R^t{(A^t)}^{m-l}w$. The relation $AR=RB$ gives $R^tA^t=B^tR^t$, and thus we get  ${(B^t)}^{m-k}R^tv={(B^t)}^{m-l}R^tw$. Now by the definition, $[R^tv,k]=[R^tw,l]$. Thus $\theta$ is well-defined. Next we have 
\begin{align*}
\theta([v,k]+[w,l])&=\theta([{(A^t)}^lv+ {(A^t)}^kw, k+l])=[R^t\big ({(A^t)}^lv+ {(A^t)}^kw\big), k+l]\\
&=[R^t{(A^t)}^lv+ R^t{(A^t)}^kw, k+l]= [{(B^t)}^lR^tv+ {(B^t)}^kR^tw, k+l]\\
&=[R^tv,k]+[R^tw,l]=\theta([v,k])+\theta([w,l]).
\end{align*}
For the module structure, it is enough to show that $\theta$ preserves the action of $x$. We have 
\begin{equation*}
\theta({}^x[v,k]))=\theta([A^tv,k]))=[R^tA^tv,k]=[B^tR^tv,k]={}^x[R^tv,k]={}^x\theta([v,k]).
\end{equation*}
Since $R$ is a non-negative integral matrix, $\theta$ is an order homomorphism. 

Note that $R$ gives a homomorphism from $\Qz^{|A|}$ and $\Qz^{|B|}$.  Since 
$$
wA^k R = w R B^k
$$
and  $wA^l \in \Zz^{|A|}$ implies $wRB^l \in \Zz^{|B|}$, we have that $R$ restricts to a homomorphism from $\Delta_A$ to $\Delta_B$.  As $R$ is a non-negative integral matrix, $R \colon \Delta_A \rightarrow \Delta_B$ is order-preserving.  To prove that $R \colon \Delta_A \rightarrow \Delta_B$ is a module map, it is enough to show that $R$ preserves the action of $x$.  We have
$$
({}^x w)R = wAR = wRB = {}^x (wR).  
$$
For the commutativity of \eqref{eq-kthy}, recall that $\psi_A(v) = [ (A^t)^l v^t , l ]$, where $l \in \Nz$ such that $vA^l \in \Zz^{|A|}$ and $\psi_B(w) = [ (B^t)^k w^t , k ]$, where $k \in \Nz$ such that $vB^k \in \Zz^{|B|}$.  As $vA^s R = v R B^s$ for all $s$, a computation shows that \eqref{eq-kthy} commutes.

Next, we will show (2).  Suppose there exists an order-preserving module homomorphism $\alpha$ from $K^{\gr}_0(L_\K(E))$ to $K^{\gr}_0(L_\K(F))$.  Then we get  an order-preserving module homomorphism $\theta\colon G_A \rightarrow G_B$ such that the diagram 
\begin{equation*}
\begin{tikzcd}
    K_0^\gr(L_\K(E)) \arrow[r, "\alpha"] \arrow[d, "\beta_E"]  & K_0^\gr(L_\K(F)) \arrow[d,"\beta_F"] \\
    G_A \arrow[r,"\theta"]  & G_B
\end{tikzcd}
\end{equation*}
commutes.  Thus, it is enough to find a non-negative integral matrix $R$ and an $r \in \Zz$ such that $AR=RB$ and 
\[
\theta ([v,k]) = {}^{x^r} [ R^t v, k ]
\]
for all $[v,k ] \in G_A$.

Set \[[v_i,l_i]\coloneq \theta([e_i,0]),\]  where $e_i\in \mathbb Z^{|A|}$, $1\leq i \leq |A|$, are columns with $1$ on $i$-th entry and zero elsewhere, $v_i \in \mathbb Z^{|B|}$ and $l_i\in \mathbb N$. Since $\theta$ is order-preserving we can choose $v_i$ such that $v_i\in \mathbb N^{|B|}$.   Let $s=\sum_{j=1}^{|A|}l_j$, $s_i=\sum_{j=1, j\not = i}^{|A|} l_j$, $1\leq i \leq |A|$. Then  $[v_i,l_i]=[{(B^t)}^{s_i} v_i,s]$, for every $1\leq i \leq |A|$. Set ${R'}^t=({(B^t)}^{s_i}v_i)_{1\leq i \leq |A|}$. Clearly $R'$ is a non-negative integral $|A| \times |B|$-matrix. Since $\theta$ is a homomorphism, for $v\in \mathbb Z^{|A|}$, writing $v=(r_1, \dots, r_{|A|})^t$,  we have 
\[\theta([v,k])=\theta\big ([\sum_{j=1}^{|A|}r_je_j,k]\big)=\sum_{j=1}^{|A|} \theta([r_je_j,k])=\sum_{j=1}^{|A|} r_j\theta([e_j,k])=[{R'}^tv,k+s].\]
Next, since $\theta$ is a $\mathbb Z[x,x^{-1}]$-module homomorphism, for any $1\leq i \leq |A|$, we have, 
\[[{R'}^tA^te_i,s]= \theta([A^te_i,0])= \theta({}^x[e_i,0])={}^x\theta([e_i,0])=[B^t{R'}^te_i,s].\]
Thus we have ${(B^t)}^{\ell_i}{R'}^tA^te_i={(B^t)}^{\ell_i}{B'}^t R^te_i$, for some $\ell_i \in \mathbb N$. Choose a large enough $\ell$, so that ${(B^t)}^{\ell}{R'}^tA^te_i={(B^t)}^{\ell}B^t {R'}^te_i$ for all $1\leq i \leq |A|$. Set $R^t={(B^t)}^\ell{R'}^t$. Then $R^tA^te_i=B^tR^te_i$, for all $1\leq i \leq |A|$, implies that 
$AR=RB$ and $\theta([v,k])=[R^tv,k+s+\ell] = {}^{x^{s+\ell}} [ R^t v, k ]$, for $[v,k]\in G_A$. The proof is complete. 
\end{proof}

Let $\theta\colon K^{\gr}_0(L(E)) \rightarrow K^{\gr}_0(L(E))$ be an order-preserving $\mathbb Z[x,x^{-1}]$-module homomorphism, which, by Lemma \ref{ARRB} above is given by a matrix $R$. 
Then, $\theta$ is injective, if there is a non-negative integral matrix $S$, such that $A^m=RS$, for some $m\in \mathbb N$. 
Similarly, $\theta$ is surjective if there is a non-negative integral matrix $T$, such that $B^k=TR$, for some $k\in \mathbb N$. 
This shows that the definition of shift equivalent can be relaxed to the following: 
Two square matrices $A$ and $B$ over $\Nz$ are shift equivalent if there exist $m,k\in \Nz$ and rectangular integral matrices $R$,  $S$ and $T$ satisfying the relations
\begin{equation} \label{eq:SE2}
    A^m = R S, \quad A R = R B, \quad B^k = T R.
\end{equation}

It remains open and quite illusive whether $L_2$ and its Cuntz splice $L_{2_-}$ are isomorphic~\cite{Lpabook,willie}. Arnone and Corti\~nas~\cite{ArnoneCor23} showed that there are no unital graded homomorphisms (in particular isomorphisms) between these algebras. Using our lifting Lemma~\ref{ARRB}, the proof of this fact is immediate.

\begin{corollary}\label{lhgtr}
Let $L_n$ be the Leavitt path algebra associated to one vertex and $n$-loops and $L_{n_-}$ be the Leavitt path over the Cuntz splice of this graph. Then there are no unital graded ring homomorphisms $L_n \rightarrow L_{n_-}$, nor in the opposite direction.     
\end{corollary}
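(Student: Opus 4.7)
The plan is to use the lifting Lemma \ref{ARRB}(2) to reduce the non-existence of unital graded homomorphisms to a simple non-existence statement for non-negative integer matrices, which is then verified by a short componentwise computation.

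For the direction $L_n \to L_{n_-}$: if a unital graded ring homomorphism existed, it would induce a pointed order-preserving $\Zz[x,x^{-1}]$-module homomorphism $\alpha \colon K_0^{\gr}(L_n) \to K_0^{\gr}(L_{n_-})$. Writing $A = (n)$ for the $1 \times 1$ adjacency matrix of the bouquet and
\[
B = \begin{pmatrix} n & 1 & 0 \\ 1 & 1 & 1 \\ 0 & 1 & 1 \end{pmatrix}
\]
for the standard $3 \times 3$ adjacency matrix of its Cuntz splice, Lemma \ref{ARRB}(2) would produce a non-negative integer $1 \times 3$ matrix $R = (a_1, a_2, a_3)$ and an integer $r$ with $AR = RB$ and $\alpha = {}^{x^r}\alpha_R$. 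Expanding $nR = RB$ componentwise, the first and third entries force $a_2 = 0$, and the middle entry then forces $a_1 + a_3 = 0$; by non-negativity, $R = 0$. This gives $\alpha_R = 0$ and hence $\alpha = 0$, contradicting pointedness, since $[L_{n_-}]$ is a nonzero order unit of $K_0^{\gr}(L_{n_-})$ (and the ${}^{x^r}$ twist cannot rescue a zero map).

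The opposite direction $L_{n_-} \to L_n$ is entirely symmetric: one would obtain a non-negative integer $3 \times 1$ matrix $R$ satisfying $BR = nR$, and the analogous componentwise analysis again forces $R = 0$, contradicting pointedness. The only real content beyond Lemma \ref{ARRB} is the short matrix calculation, so there is no serious obstacle to carrying out the argument; this is the source of the brevity of our approach compared to \cite{ArnoneCor23}.
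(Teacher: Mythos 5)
Your proposal is correct and follows essentially the same route as the paper: invoke Lemma~\ref{ARRB}(2) to obtain a non-negative integral matrix $R$ with $(n)R = RB$, check componentwise that $R$ must vanish, and derive a contradiction with pointedness; your write-up merely spells out the matrix computation and the final contradiction that the paper leaves as ``it immediately follows.'' (One tiny imprecision: it is the first entry of $nR = RB$ alone that forces $a_2 = 0$; the third entry only gives $(n-1)a_3 = a_2$.)
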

\begin{proof}
A unital ring homomorphism $L_n \rightarrow L_{n_-}$ induces a homomorphism on the level of graded $K_0$-groups. Then by Lemma~\ref{ARRB}(2), there is a non-negative integral $1\times 3$ matrix $R$ such that $(n) R=RB$, which induces the homomorphism of $K_0$-groups. Here 
$B=\begin{pmatrix}
n & 1 & 0\\
1 & 1 & 1\\
0& 1& 1\\
\end{pmatrix}$ is the adjacency matrix of $L_{n_-}$. It immediately follows that $R$ has to be a zero vector, a contradiction. The opposite direction is similar. 
\end{proof}

\begin{remark}
In~\cite{ArnoneCor23} it was also shown that there are no unital $\mathbb Z/m \mathbb Z$-graded ring homomorphism $L_n \rightarrow L_{n_-}$, nor in the opposite direction. We briefly mention that this can also be proved similarly by re-writing Lemma~\ref{ARRB} for the Leavitt path algebras with $\mathbb Z/m \mathbb Z$-grading. Assigning weight $1\in \mathbb Z/m \mathbb Z$ to edges, make a Leavitt path algebra a $\mathbb Z/m \mathbb Z$-graded algebra. One can observe that $K_0^{\mathbb Z/m \mathbb Z-\gr}(L_\K(E))$ is the group $\mathbb Z^{|E|}\times \mathbb Z/m \mathbb Z/\sim$, where $(v,k)\sim (w,l)$ if $(A^t)^pv=(A^t)^qw$, for some $p,q$ such that  $p+k = q+l \mod m$. Here the graded Grothendieck group $K_0^{\mathbb Z/m \mathbb Z-\gr}$ is constructed from the finitely generate $\mathbb Z/m\mathbb Z$-graded  projective $L_\K(E)$-modules (see \S\ref{algpre}).  

A similar proof as in Lemma~\ref{ARRB} shows that a module homomorphism \[K_0^{\mathbb Z/m \mathbb Z-\gr}(L_\K(E))\rightarrow K_0^{\mathbb Z/m \mathbb Z-\gr}(L_\K(F)),\] gives a non-negative integral matrix $R$ such that $ARB^{km}=BR$, for some $k\in \mathbb N$. With this, Corollary~\ref{lhgtr} can be written for the $\mathbb Z/m\mathbb Z$-grading.  
\end{remark}

Let $E$ be a finite graph with no sinks, and let $\K$ be a field.
We let $\K E^0$ be the $\K$-algebra with basis $E^0$ and relation $v w = \delta_{v,w} v$, for all $v,w\in E^0$.
The $\K$-vector space $\K E^1$ with basis $E^1$ is a $\K E^0$-bimodule with the relation $v\cdot e \cdot w = \delta_{v,s(e)} \delta_{w,r(e)} e$, for all $e\in E^1$ and $v,w\in E^0$.

\medskip

\begin{proposition} \label{prop:se-mod-hom}
Let $E$ and $F$ be finite graphs with no sinks and with adjacency matrices $A$ and $B$, respectively. 
If there is a non-negative integral matrix $R$ such that $AR=RB$, then there is an $L_\K(E) - L_\K(F)$-bimodule $M$, called the bridging bimodule, and the functor 
\begin{equation}\label{strip}
\cF_M \coloneq - \otimes_{L_\K(E)} M  \colon \Gr L_\K(E) \to 
\Gr L_\K(F),    
\end{equation}
induces an order-preserving  $\Zz[x,x^{-1}]$-module homomorphism from $K_0^\gr(L_\K(E))$ to $K_0^\gr(L_\K(F))$ that, up to identification of the graded K-theory with the dimension triple, coincides with the map induced from $R$. 
\end{proposition}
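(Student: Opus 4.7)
The plan is to invoke the bridging bimodule $M$ of \cite[Definition~4.12]{Abrams-Ruiz-Tomforde2023}, and then identify $K_0^\gr(\cF_M)$ with $\alpha_R$ from Lemma~\ref{ARRB}(1) by comparing both on the vertex generators $[vL_\K(E)]$.

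For the construction of $M$, I would introduce a set $R^1 = \bigsqcup_{(v,w) \in E^0 \times F^0} R^1_{v,w}$ of \emph{bridging arrows} with $|R^1_{v,w}| = R_{vw}$; each $f \in R^1_{v,w}$ is regarded as an arrow from $v$ to $w$, and we declare $\deg(f) = 0$. The bimodule $M$ is the $\K$-span of formal monomials $\alpha f \beta^*$ where $\alpha$ is an $E$-path ending at $s(f)$, $f \in R^1$, and $\beta$ is an $F$-path starting at $r(f)$; the left $L_\K(E)$- and right $L_\K(F)$-actions are given by juxtaposition together with Cuntz-Krieger reductions. The nontrivial consistency check is that the left CK-relation at a regular $v \in E^0$ and the right CK-relation at a regular $w \in F^0$ interact compatibly with the bridging arrows. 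This hinges on fixing a bijection between the set of pairs $(e, f)$ with $e \in E^1$, $f \in R^1$, $s(e) = v$, $r(e) = s(f)$, $r(f) = w$, and the set of pairs $(g, h)$ with $g \in R^1$, $h \in F^1$, $s(g) = v$, $r(g) = s(h)$, $r(h) = w$; both sets have cardinality $(AR)_{vw} = (RB)_{vw}$, and the bijection corresponds to the relation $ef = gh$ in $M$. This realizes $M$ as a well-defined $\Zz$-graded $L_\K(E)$-$L_\K(F)$-bimodule.

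Given this bimodule, the functor $\cF_M = -\otimes_{L_\K(E)} M \colon \Gr L_\K(E) \to \Gr L_\K(F)$ is graded, and the induced map $K_0^\gr(\cF_M)$ is an order-preserving $\Zz[x,x^{-1}]$-module homomorphism -- the former because $\cF_M$ sends graded finitely generated projectives to graded finitely generated projectives, and the latter because $\cF_M$ commutes with the shift functor $\cT_1$. By Lemma~\ref{ARRB}(1) and the fact that the classes $\{[vL_\K(E)]\}_{v \in E^0}$ generate $K_0^\gr(L_\K(E))$ as a $\Zz[x,x^{-1}]$-module, it suffices to verify $K_0^\gr(\cF_M)([vL_\K(E)]) = \sum_w R_{vw} [wL_\K(F)]$ for all $v \in E^0$. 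Using $vL_\K(E) \otimes_{L_\K(E)} M \cong vM$, this reduces to exhibiting a graded isomorphism $vM \cong \bigoplus_{w \in F^0} (wL_\K(F))^{R_{vw}}$ of graded right $L_\K(F)$-modules, one summand $r(f) L_\K(F)$ per bridging arrow $f \in R^1_{v, \cdot}$. The grading convention $\deg(f) = 0$ ensures no shift appears, and the decomposition itself follows from the explicit bimodule structure and the Cuntz-Krieger normal-form analysis for $M$ carried out in \cite[Section~4]{Abrams-Ruiz-Tomforde2023}.

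The main obstacle is the first step, namely articulating the bridging bimodule so that left and right CK relations are simultaneously compatible with the bridging arrows. This compatibility rests on the choice of bijection realizing $AR = RB$, and one must also verify that the resulting bimodule structure is canonical up to graded isomorphism independent of that choice. Once $M$ and its structure are in hand, the $K$-theoretic identification is a direct computation on vertex generators combined with the uniqueness statement of Lemma~\ref{ARRB}(1).
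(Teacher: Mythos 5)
Your proposal is correct and follows essentially the same route as the paper: both take the bridging bimodule of \cite[Section~4]{Abrams-Ruiz-Tomforde2023} (the paper writes it as $\K R^1\otimes_{\K F^0}L_\K(F)$ with the left action coming from a specified conjugacy $\sigma\colon \K E^1\otimes \K R^1\cong \K R^1\otimes \K F^1$, which is exactly your bijection realizing $(AR)_{vw}=(RB)_{vw}$), and both then identify $K_0^\gr(\cF_M)$ with the map induced by $R$ via the isomorphism $vM\cong\bigoplus_w (wL_\K(F))^{R(v,w)}$ on vertex generators. The only inessential difference is that you flag independence of the choice of bijection as something to check, whereas the proposition only requires one such choice, so the paper simply fixes one.
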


\begin{proof}
Consider the $\K$-vector space $\K R^1$ with basis 
$$
\big \{ \mathbf{e}_{v, w, i} : v \in V, w \in W \text{ with } R(v,w) \neq 0,  1 \leq i \leq R(v,w) \big \} 
$$
which is a $\K E^0$--$\K F^0$-bimodule via $v' \mathbf{e}_{v, w, i} w' = \delta_{v, v'} \delta_{w, w'} \mathbf{e}_{v,w,i}$.  The relation $AR = R B$ implies there is an isomorphism $\sigma\colon \K E^1 \otimes_{\K E^0} \K R^1 \cong \K R^1 \otimes_{\K F^0} \K F^1$ (this is a specified conjugacy in the terminology of \cite[Section 4]{Abrams-Ruiz-Tomforde2023}).
By \cite[Theorem 4.11]{Abrams-Ruiz-Tomforde2023}, it now follows that the right $L_\K(F)$-module \[M \coloneq \K R^1 \otimes_{\K F^0} L_\K(F),\] can be endowed with a left $L_\K(E)$-module structure for which $M$ becomes a $L_\K(E)$--$L_\K(F)$-bimodule.  
Moreover, if we set the grading $M_n = \K R^1 \otimes_{\K F^0} L_\K(F)_n$, for all $n \in \Zz$, then $M$ is a graded $L_\K(E)$--$L_\K(F)$-bimodule.
This is called the bridging bimodule.

Next, we verify that the functor $\cF_M \coloneqq - \otimes_{L_\K(E)} M$ induces a homomorphism from $K_0^\gr(L_\K(E))$ to $K_0^\gr(L_\K(F))$.  As $\cF_M$ is a functor and as $K_0^\mathrm{gr} (L_\K(E))$ is generated by $[ v L_\K(E)(n)]$ for all $v \in E^0$, it is enough to show $\cF( (vL_\K(E))(n)  )$ is a finitely generated graded projective module for all $v \in E^0$. We claim that for each $v\in E^0$ and $n\in \Zz$ there is a graded isomorphism of right $L_\K(F)$-modules
\[
\alpha\colon (vL_\K(E))(n) \otimes_{L_\K(E)} M \longrightarrow\bigoplus_{w\in F^0} (wL_\K(F))(n)^{R(v,w)}.
\]
To see this first note
$$
v(L_\K(E))(n) \otimes_{L_\K(E)} M \cong vM(n)
$$
via the graded module isomorphism $vS \otimes m \otimes T \mapsto (vS) \cdot (m \otimes T)$ with inverse $v ( m \otimes T) \mapsto v \otimes m \otimes T$.  
Since
$$
v(\K R^1) \cong \bigoplus_{ \substack{ w \in F^0  \\ R(v,w) \neq 0} } \bigoplus_{ i = 1}^{ R(v,w) } \K \mathbf{e}_{v,w,i}
$$ 
as right $\K F^0$-modules, $\alpha$ will be the composition of the following graded module isomorphisms
\begin{align*}
    v(L_\K(E))(n) \otimes_{L_\K(E)} M &\cong vM(n) = v( \K R^1) \otimes_{\K F^0 } L_\K(F)(n) \\
                                &\cong \bigoplus_{ \substack{ w \in F^0  \\ R(v,w) \neq 0} } \bigoplus_{ i = 1}^{ R(v,w) } \K \mathbf{e}_{v,w,i} \otimes_{ \K F^0 } L_\K(F)(n) \\
                                &\cong \bigoplus_{ \substack{ w \in F^0  \\ R(v,w) \neq 0} } (wL_\K(F))(n)^{ R(v,w)}
\end{align*}
This shows that $\cF_M$ induces a $\Zz[x,x^{-1}]$-module homomorphism $K_0^\gr(L_\K(E)) \to K_0^\gr(L_\K(F))$.  Moreover, $\cF_M$ clearly sends $K_0^\gr(L_\K(E))_+$ to $K_0^\gr(L_\K(F))_+$.

For the second assertion, recall the notation from Section \ref{sec:symbolic-dynamics}. Since diagram \eqref{eq-kthy-shift} with $\alpha = K_0^\gr(\cF_M)$ commutes, we see that $\cF_M$ induces the same map as $R$ as a map from $\Delta_A$ to $\Delta_B$.
\end{proof}

Notice that due to the commutation of diagram \eqref{eq-kthy-shift} with $\alpha = K_0^\gr(\cF_M)$ we also have that $K_0^\gr(\cF_M)$ is pointed precisely when the integral matrix $R$ induces a unital map between $\Delta_A$ to $\Delta_B$, which happens exactly when the shift equivalence is unital. 
Thus, in the case of a unital shift equivalence, we can lift the homomorphism on graded $K$-theory to a graded homomorphism of algebras using Proposition \ref{prop:ring-lift}.

\begin{theorem}\label{homfunlift}
    Let $E$ and $F$ be finite graphs with no sinks. 
    \begin{enumerate}[\upshape(1)]

    \item If $\mathcal F\colon \Gr L_\K(E) \to 
\Gr L_\K(F)$ is a graded functor that is right exact and commutes with direct sums, then $\mathcal F \cong  - \otimes_{L_\K(E)} \mathcal F(L_\K(F))$. Thus, if 
$\mathcal{F}(L_\K(F))$ is a graded, finitely generated, projective $L_\K(F)$-module, then $\mathcal{F}$ induces an order-preserving $\mathbb Z[x,x^{-1}]$-module homomorphism $K^{\gr}_0(L_\K(E)) \rightarrow K^{\gr}_0(L_\K(F))$ which is pointed if $\mathcal F$ is. 

\item If $\alpha$ is an order-preserving $\mathbb Z[x,x^{-1}]$-module homomorphism $K^{\gr}_0(L_\K(E)) \rightarrow K^{\gr}_0(L_\K(F))$, then there exists a graded functor $\mathcal F\colon \Gr L_\K(E) \to \Gr L_\K(F)$ which is right exact, commutes with direct sums, and $\mathcal{F}(L_\K(E))$ is a graded, finitely generated, projective $L_\K(F)$-module such that $K_0^\gr(\mathcal{F}) = \alpha$. 
  Furthermore, if $\alpha$ is pointed, so is $\mathcal F$.  \end{enumerate}
\end{theorem}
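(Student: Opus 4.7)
The plan is to prove part~(1) as a graded version of Watts's representation theorem and then to deduce part~(2) by combining Lemma~\ref{ARRB}(2), Proposition~\ref{prop:se-mod-hom}, and the shift functor $\mathcal{T}_r$.

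For part~(1), given a graded, right exact, direct-sum-preserving functor $\mathcal{F}$, I set $M \coloneqq \mathcal{F}(L_\K(E))$; tautologically this is a graded right $L_\K(F)$-module. I endow $M$ with a left $L_\K(E)$-action by observing that, for homogeneous $r \in L_\K(E)_d$, left multiplication $\lambda_r \colon L_\K(E) \to L_\K(E)(d)$ is a morphism in $\Gr L_\K(E)$; since $\mathcal{F}$ is graded we may identify $\mathcal{F}(L_\K(E)(d)) = M(d)$, so $\mathcal{F}(\lambda_r)$ is a map $M \to M(d)$, and I declare this to be left multiplication by $r$ on $M$. Functoriality of $\mathcal{F}$ enforces the bimodule axioms. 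Next, I construct a natural transformation $\tau \colon - \otimes_{L_\K(E)} M \to \mathcal{F}$ by sending $n \otimes m$ to $\mathcal{F}(\hat n)(m)$, where for homogeneous $n \in N$ the map $\hat n$ is the (appropriately shifted) graded right $L_\K(E)$-module map $L_\K(E) \to N$ determined by $x \mapsto nx$. The map $\tau_{L_\K(E)}$ is the canonical multiplication isomorphism, so $\tau_{L_\K(E)(n)}$ is an isomorphism since $\mathcal{F}$ is graded; by the direct-sum hypothesis the same holds on graded free modules, and then by right exactness and a graded free presentation on arbitrary graded modules. Whenever $M = \mathcal{F}(L_\K(E))$ is f.g.\ graded projective, $\mathcal{F}$ carries every f.g.\ graded projective right $L_\K(E)$-module (a direct summand of a finite sum of shifts of $L_\K(E)$) to an f.g.\ graded projective right $L_\K(F)$-module, so $\mathcal{F}$ induces an order-preserving $\Zz[x,x^{-1}]$-module homomorphism on $K_0^\gr$; this is pointed whenever $\mathcal{F}(L_\K(E)) \cong_\gr L_\K(F)$, the pointedness notion used in Propositions~\ref{prop:ring-lift} and \ref{prop:unital-graded-ME}.

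For part~(2), apply Lemma~\ref{ARRB}(2) to $\alpha$ to produce a non-negative integral matrix $R$ with $AR = RB$ and an integer $r$ such that $\alpha = {}^{x^r}\alpha_R$. By Proposition~\ref{prop:se-mod-hom}, the bridging bimodule $M_R$ yields a graded, right exact, direct-sum-preserving functor $\mathcal{F}_{M_R} = - \otimes_{L_\K(E)} M_R$ inducing $\alpha_R$ on $K_0^\gr$. Define
\[
\mathcal{F} \coloneqq \mathcal{T}_r \circ \mathcal{F}_{M_R} = - \otimes_{L_\K(E)} M_R(r).
\]
Since $\mathcal{T}_r$ is a graded auto-equivalence of $\Gr L_\K(F)$, the functor $\mathcal{F}$ inherits being graded, right exact, and direct-sum-preserving. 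The explicit decomposition of $M_R$ recorded in the proof of Proposition~\ref{prop:se-mod-hom} shows $\mathcal{F}(L_\K(E)) = M_R(r)$ is f.g.\ graded projective, and the induced map on graded $K$-theory is $K_0^\gr(\mathcal{T}_r) \circ \alpha_R = {}^{x^r}\alpha_R = \alpha$. For the pointed case, if $\alpha([L_\K(E)]) = [L_\K(F)]$ then $[\mathcal{F}(L_\K(E))] = [L_\K(F)]$ in $K_0^\gr(L_\K(F))$; since the graded $V$-monoid of f.g.\ graded projective modules over the Leavitt path algebra of a finite graph with no sinks embeds into $K_0^\gr$ (cancellation), this class-level equality promotes to a graded right $L_\K(F)$-module isomorphism $\mathcal{F}(L_\K(E)) \cong_\gr L_\K(F)$, certifying that $\mathcal{F}$ is pointed.

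The main obstacle is the careful bookkeeping in part~(1): verifying that $M$ really is a graded bimodule under this prescription, that $\tau_N$ is well-defined and natural, and that $\tau$ is already an isomorphism on all graded free modules (so that right exactness takes over). Part~(2) is then a clean assembly of previously established ingredients, the only extrinsic input being the cancellation in the graded $V$-monoid used to upgrade $K_0^\gr$-class equality to a graded module isomorphism.
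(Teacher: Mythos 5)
Your proposal is correct and follows essentially the same route as the paper: part (1) is the graded adaptation of Watts's theorem (left action on $\mathcal F(L_\K(E))$ via $\mathcal F$ applied to left-multiplication maps, isomorphism on graded frees, then right exactness on a graded free presentation), and part (2) assembles Lemma~\ref{ARRB}(2), the bridging bimodule of Proposition~\ref{prop:se-mod-hom}, and the shift $\mathcal T_r$. The one place you go beyond the paper is the pointedness claim in (2), which the paper dismisses as obvious: your appeal to cancellation in the graded $V$-monoid (valid for finite graphs with no sinks, where it embeds into the dimension group) is exactly the ingredient needed to upgrade the $K_0^{\gr}$-class equality $[\mathcal F(L_\K(E))]=[L_\K(F)]$ to a graded module isomorphism.
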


\begin{proof}
(1) Let $\mathcal F\colon \Gr R \to \Gr S$ be a graded functor between the module categories of $\Gamma$-graded rings $R$ and $S$, where $\Gamma$ is an arbitrary abelian group.  
We will show that the functor $\mathcal F$ is isomorphic to $- \otimes_R \mathcal F(R)$. 
Specializing to $\mathbb Z$-graded Leavitt path algebras, we will then obtain one direction of the theorem. The proof is similar to the non-graded case~\cite{watts} with extra attention given to the presence of the grading. We provide the proof for completeness. 

Let $M$ be a graded right $R$-module and $m\in M_\alpha$. The map $\theta_m\colon R \to M(\alpha); a \mapsto ma$, is a graded right $R$-module homomorphism, thus a morphism of the category $\Gr R$. Hence we have a graded right $S$-module homomorphism $\mathcal F(\theta_m)\colon\mathcal F(R) \to \mathcal F(M)(\alpha)$. In particular, for any $a\in R_\alpha$ we have $\mathcal F(\theta_a)\colon \mathcal F(R) \to \mathcal F(R)(\alpha)$. For $a\in R_\alpha$ and $b\in \mathcal F(R)_\beta$, define 
$a.b=\mathcal F(\theta_a)(b)$ and extend it linearly to all $R$ and $\mathcal F(R)$. It is easy to see that this makes $\mathcal F(R)$ a graded left $R$-module compatible with the graded right $S$-module structure. Define 
\begin{align*}
M\otimes_A \mathcal F(R) &\longrightarrow \mathcal F(M)\\
m\otimes x &\longmapsto \mathcal F(\theta_m)(x).
\end{align*}
This is a natural graded right $S$-module homomorphism, and $R\otimes_R \mathcal F(R) \cong \mathcal F(R)$.

We write the graded right $R$-module $M$ as a cokernel of the graded free $R$-modules,   that is 
\begin{equation}\label{gftrgu}
\bigoplus_{j\in J}R(\beta_j)\longrightarrow\bigoplus_{i\in I} R(\alpha_i)
\stackrel{\varepsilon}{\longrightarrow} M \longrightarrow 0,
\end{equation}
is an exact sequence of graded right $R$-modules. Here, for $\varepsilon$, choose a set $\{m_i\}_{i\in I}$ of homogeneous generators of $M$, with 
$\deg(m_i)=-\alpha_i$ and define $\varepsilon(e_i) \coloneq m_i$, where $e_i\in \bigoplus_{i\in I}R(\alpha_i)$ with $1$ on the $i$-th entry and zero elsewhere. One repeats this for $\ker(\varepsilon)$ to obtain the exact sequence~(\ref{gftrgu}).

Applying the functors $\mathcal F$ and the tensor product to this sequence we have:

\begin{equation*}
\begin{tikzcd}
    \bigoplus_{j\in J}R(\beta_j) \otimes_R \mathcal F(R) \arrow[r] \arrow[d]  & \bigoplus_{i\in I}R(\alpha_i) \otimes_R \mathcal F(R) \arrow[d] \arrow[r] & M\otimes_R \mathcal F(R)  \arrow[d] \arrow[r] & 0 \\
\mathcal F \big (\bigoplus_{j\in J}R(\beta_j) \big)  \arrow[r]   & \mathcal F \big(\bigoplus_{i\in I}R(\alpha_i)\big) \arrow[r] & \mathcal F(M)\arrow[r] & 0 
\end{tikzcd}
\end{equation*}

  Since the functor $\mathcal F$ and the tensor product are right exact, the rows in the above diagram are exact. Since the graded functor $\mathcal F$ commutes with direct sum, the left two vertical maps are isomorphisms, which implies that the right vertical map is an isomorphism too. Since all the isomorphisms are natural, this shows that $\mathcal F \cong -\otimes_R \mathcal F(R)$. Thus $\mathcal F$ induces a homomorphism $K_0^{\gr}(R) \rightarrow K_0^{\gr}(S)$, where we have $ [P]\mapsto [\mathcal{F}(P)] = [P\otimes_R \mathcal F(R)]$, when $\mathcal{F}(R)$ is a graded, finitely generated, projected module.   
  Clearly if $\mathcal F$ is pointed, meaning $\mathcal F(R) \cong S$ as graded $S$-modules,  then under the above homomorphism  $ [R]\mapsto [R\otimes_R \mathcal F(R)]=[S]$, i.e., the homomorphism is pointed. 
  
  (2) Let $A$ and $B$ be the adjacency matrices of the graphs $E$ and $F$, respectively. Given an order-preserving $\mathbb Z[x,x^{-1}]$-module $\alpha: K_0^{\gr}(L_\K(E)) \rightarrow K_0^{\gr}(L_\K(F))$, by Lemma~\ref{ARRB}, there is a non-negative integral matrix $R$ and $r\in \mathbb Z$, such that $AR=RB$, and $\alpha$ is induced by $R$ and the shift $r$.  Now by Proposition~\ref{prop:se-mod-hom} there is a graded $L_\K(F)$-module $M$ such that $\mathcal F_M = -\otimes_{L_\K(E)} M$ induces a homomorphism on $K_0^{\gr}$ groups that coincides with $R$. Thus $\mathcal F_{M(r)}= -\otimes_{L_\K(E)} M(r)$ induces the homomorphism $\alpha$. The pointed claim is obvious now.
\end{proof}

We can now provide a simple proof of the fullness conjecture made by the third-named author in \cite{Hazrat2013} for finite graphs with no sinks. 

\begin{corollary}
Let $E$ and $F$ be finite graphs with no sinks.  Suppose there exists an order-preserving, pointed, $\Zz[x,x^{-1}]$-module homomorphism $\alpha \colon K_0^{\mathrm{gr}} (L_\K(E)) \to K_0^{\mathrm{gr}} (L_\K(F))$.  Then, there exists a unital graded homomorphism $\psi \colon L_\K(E) \to L_\K(F)$ such that $K_0(\psi) = \alpha$.
\end{corollary}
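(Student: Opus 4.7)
The plan is to combine Theorem \ref{homfunlift}(2) with Proposition \ref{prop:ring-lift} in a direct way: first lift $\alpha$ from $K$-theory to a pointed graded functor between module categories, then lift that functor to a unital graded ring homomorphism. All the heavy machinery has already been established, so the corollary becomes essentially a synthesis.

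First, I would apply Theorem \ref{homfunlift}(2) to the pointed order-preserving $\Zz[x,x^{-1}]$-module homomorphism $\alpha$. This produces a graded functor $\mathcal{F} \colon \Gr L_\K(E) \to \Gr L_\K(F)$ that is right exact, commutes with direct sums, sends $L_\K(E)$ to a graded finitely generated projective $L_\K(F)$-module, and satisfies $K_0^\gr(\mathcal{F}) = \alpha$. Crucially, because $\alpha$ is pointed, the construction produces a pointed functor in the sense that $\mathcal{F}(L_\K(E)) \cong L_\K(F)$ as graded right $L_\K(F)$-modules.

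Next, by Theorem \ref{homfunlift}(1), the functor $\mathcal{F}$ is naturally isomorphic to $- \otimes_{L_\K(E)} M$, where $M \coloneq \mathcal{F}(L_\K(E))$ carries a natural graded $L_\K(E)$--$L_\K(F)$-bimodule structure. In particular, $M \cong L_\K(F)$ as graded right $L_\K(F)$-modules. This places us squarely in the hypothesis of Proposition \ref{prop:ring-lift}, which then produces a unital graded ring homomorphism $\psi \colon L_\K(E) \to L_\K(F)$ with $K_0^\gr(\psi) = K_0^\gr(\mathcal{F}_M) = K_0^\gr(\mathcal{F}) = \alpha$.

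There is no genuine obstacle to overcome here: the conceptual work was already carried out in the two preceding sections, namely (i) constructing the bridging bimodule from a nonnegative integral matrix $R$ with $AR = RB$ via Proposition \ref{prop:se-mod-hom}, and (ii) establishing the abstract lifting of a bimodule-induced functor to a ring homomorphism in Proposition \ref{prop:ring-lift}. The only subtlety worth double-checking is that the $K$-theory class of $\psi$ truly equals $\alpha$ on the level of the full $\Zz[x,x^{-1}]$-module structure (and not merely modulo the action of $x$); this is guaranteed because the functor produced by Theorem \ref{homfunlift}(2) was constructed from a bridging bimodule composed with a shift $\mathcal{T}_r$ to absorb the exponent $r \in \Zz$ appearing in Lemma \ref{ARRB}(2), so the induced map already matches $\alpha$ exactly on the nose.
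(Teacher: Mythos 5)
Your proposal is correct and follows essentially the same route as the paper's own proof: invoke Theorem~\ref{homfunlift} to realize $\alpha$ as $K_0^{\gr}$ of a functor of the form $-\otimes_{L_\K(E)} M$ with $M \cong L_\K(F)$ as graded right $L_\K(F)$-modules (using pointedness), and then apply Proposition~\ref{prop:ring-lift} to lift to a unital graded ring homomorphism. Your closing remark about the shift $M(r)$ absorbing the exponent from Lemma~\ref{ARRB}(2) is exactly how the paper's Theorem~\ref{homfunlift}(2) arranges $K_0^{\gr}(\mathcal F)=\alpha$ on the nose.
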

\begin{proof}
Theorem~\ref{homfunlift}  guarantees a bimodule $M$ and the functor $-\otimes_{L_\K(E)} M$ between the module categories of the Leavitt path algebras. Since the homomorphism is pointed,  $M$ and $L_\K(F)$ are isomorphic as $L_\K(F)$-modules. Thus Proposition~\ref{prop:ring-lift} gives a graded homomorphism   $\psi \colon L_\K(E) \to L_\K(F)$ as claimed. 
\end{proof}

\section{Unital aligned module shift equivalence} \label{sec:aligned}

Let $E$ and $F$ be finite graphs with no sinks. The aim of this section is to show that if there is a pointed $\mathbb Z[x,x^{-1}]$-module isomorphism $\alpha \colon K_0^{\mathrm{gr}} (L_\K(E)) \to K_0^{\mathrm{gr}} (L_\K(F))$ which is induced by an \emph{aligned} unital module shift equivalence then $\alpha$ lifts to a graded isomorphism between the Leavitt path algebras, thus confirming the Graded Classification Conjecture ~\ref{gcc2013}, under these assumptions. 

We first recall some definitions from \cite{Abrams-Ruiz-Tomforde2023}. In what follows, given two finite vertex sets $E^0$ and $F^0$, we say that a finite set $G^1$ is an \emph{edge set} between $E^0$ and $F^0$ if it comes equipped with range and source maps $r\colon G^1 \rightarrow F^0$ and $s\colon G^1 \rightarrow E^0$. The $5$-tuple $(E^0, F^0, G^1,r,s)$ is called a \emph{polymorphism} in \cite{Abrams-Ruiz-Tomforde2023}.  The vector space $\K G^1$ with basis $G^1$ becomes a $\K E^0 - \K F^0$-bimodule by the rule $v \cdot e \cdot w = \delta_{v, s(e)} \delta_{w, r(e)} e$.  Note that rectangular $E^0 \times F^0$ integral matrices $R$ naturally correspond to edge sets $G_R^1$ from $E^0$ to $F^0$.

\begin{definition} \label{def:module-se}
Two finite graphs $E$ and $F$ with no sinks are \emph{module shift equivalent} if there are edge sets $G^1$ from $E^0$ to $F^0$ and $H^1$ from $F^0$ to $E^0$,  respectively, together with an $m\in \Nz_{>0}$, and bimodule isomorphisms
\begin{align}
    \omega_E&\colon \K G^1 \otimes_{\K F^0} \K H^1 \longrightarrow {(\K E^1)}^{\otimes m} & \omega_F&\colon  \K H^1 \otimes_{\K E^0} \K G^1 \longrightarrow {(\K F^1)}^{\otimes m}  \\
\sigma_G &\colon {\K E^1} \otimes_{\K E^0} \K G^1 \longrightarrow \K G^1 \otimes_{\K F^0} {\K F^1} & \sigma_H &\colon {\K F^1} \otimes_{\K F^0} \K H^1 \longrightarrow \K H^1 \otimes_{\K E^0} {\K E^1}.
\end{align}
\end{definition}

Abrams, the fourth-named author, and Tomforde show that the adjacency matrices of $E$ and $F$ are shift equivalent via the  non-negative integral rectangular matrices $(R,S)$ (as in the classical sense from symbolic dynamics; see (\ref{eq:SE})) if and only if $E$ and $F$ are module shift equivalent \cite[Theorem 3.9]{Abrams-Ruiz-Tomforde2023}.  Moreover, by \cite[Theorem 3.7]{Abrams-Ruiz-Tomforde2023}, every $\K E^0- \K F^0$-bimodule is isomorphic to $\K G^1$ for some unique edge set $G^1$ between $E^0$ and $F^0$.  Consequently, there is no new information when one replaces the bimodules of edge sets in Definition~\ref{def:module-se} by arbitrary bimodules.

\begin{definition}
Two finite graphs $E$ and $F$ with no sinks are  \emph{aligned module shift equivalent} if there is a module shift equivalence as in Definition \ref{def:module-se} and, in addition, the \emph{associator relations} are satisfied

\begin{equation}
    \label{}
\begin{tikzcd}
    \K E^1\otimes_{\K E^0} \K G^1 \otimes_{\K F^0} \K H^1 \arrow[r,"\sigma_G \otimes \id"] \arrow[d,"\id\otimes \omega_E"] & \K G^1 \otimes_{\K F^0} \K F^1 \otimes_{\K F^0} \K H^1 \arrow[r,"\id \otimes \sigma_H"] & \K G^1 \otimes_{\K F^0} \K H^1 \otimes_{\K E^0} \K E^1 \arrow[d,"\omega_E \otimes \id"] \\
    \K E^1\otimes_{\K E^0} (\K E^1)^{\otimes m} \arrow[rr,"\nu^m_E"] & & (\K E^1)^{\otimes m}\otimes_{\K E^0} \K E^1
\end{tikzcd}
\end{equation}
where $\nu^m_E\colon x\otimes (x_1\otimes \cdots \otimes x_m) \mapsto (x\otimes x_1\otimes \cdots \otimes x_{m-1})\otimes x_m$,
and
\begin{equation}
    \label{}
\begin{tikzcd}
    \K F^1\otimes_{\K F^0} \K H^1 \otimes_{\K E^0} \K G^1 \arrow[r,"\sigma_H \otimes \id"] \arrow[d,"\id\otimes \omega_F"] & \K H^1 \otimes_{\K E^0} \K E^1 \otimes_{\K E^0} \K G^1 \arrow[r,"\id \otimes \sigma_G"] & \K H^1 \otimes_{\K E^0} \K G^1 \otimes_{\K F^0} \K F^1 \arrow[d,"\omega_F \otimes \id"] \\
    \K F^1\otimes_{\K F^0} (\K F^1)^{\otimes m} \arrow[rr,"\nu^m_F"] & & (\K F^1)^{\otimes m}\otimes_{\K F^0} \K F^1
\end{tikzcd}
\end{equation}
where $\nu^m_F$ is defined similarly, that is, the diagrams commute.
\end{definition}

\begin{remark}
    In a forthcoming work of the second and fourth named authors with Bilich, we define a relation on the class of finite essential matrices called \emph{aligned shift equivalence}.  We prove two finite essential matrices $A$ and $B$ are aligned shift equivalent if and only if they are strong shift equivalent.  Moreover, we show that aligned shift equivalence implies a C*-correspondence analogue of module aligned shift equivalence of the associated directed graphs. The converse remains open.  
\end{remark}

\begin{definition}
    Two finite graphs $E$ and $F$ with no sinks are \emph{unitally aligned module shift equivalent} if there are rectangular integral matrices $R$ and $S$, one of which induces a unital map on dimension groups, and such that $R$ and $S$ correspond to edge sets $G_R^1$ between $E^0$ and $F^0$ and $G_S^1$ between $F^0$ and $E^0$ (respectively), so that $E$ and $F$ are aligned module shift equivalent via $(G_R^1,G_S^1)$. 
\end{definition}

\begin{remark}
We know that strong shift equivalence implies aligned module shift equivalence which implies shift equivalence~\cite[Proposition~6.9]{Abrams-Ruiz-Tomforde2023}. 
At the moment however, we do not know if aligned module shift equivalence implies strong shift equivalence.    
\end{remark}

Before proving the main result of the paper, we need a lemma which confirms \cite[Conjecture~2]{Hazrat2013}.
It says that a ring homomorphism between Leavitt path algebras may be exchanged for an algebra homomorphism in a way that does not affect the induced map on graded $K$-theory. The lemma below is valid for any grading on Leavitt path algebra $L_\K(E)$ as long as the homogeneous components are $\K$-modules.
The proof is stated for finite graphs since we need to keep track of the unit for our application below, but for the argument for the part showing that ring homomorphisms extends to algebra homomorphism we need no  assumptions on the graphs.

\begin{lemma} \label{lem:ring-to-algebra-isomorphism}
    Let $E$ and $F$ be finite graphs, and let $\K$ be a field.  Suppose $\psi \colon L_\K(E) \to L_\K(F)$ is a unital graded homomorphism (for some grading by an abelian group) of rings.  Then there exists a unital graded homomorphism of $\K$-algebras $\Psi \colon L_\K(E) \to L_\K(F)$ such that the induced homomorphisms on graded $K$-theory $K_0^{\mathrm{gr}}(\psi)$ and $K_0^{\mathrm{gr}}(\Psi)$ are equal.  Moreover, if $\psi$ is a graded ring isomorphism, then $\Psi$ is a graded algebra isomorphism.
\end{lemma}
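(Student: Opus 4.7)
The plan is to construct $\Psi$ directly from the universal property of $L_\K(E)$ as a $\K$-algebra and then verify each of the three requirements in turn.

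\textbf{Construction.} Because $\psi$ is a ring homomorphism and the Cuntz--Krieger relations defining $L_\K(E)$ have integer coefficients, the images $\psi(v), \psi(e), \psi(e^*)$ form a Cuntz--Krieger $E$-family in $L_\K(F)$. The universal property of $L_\K(E)$ as a $\K$-algebra therefore yields a unique $\K$-algebra homomorphism $\Psi \colon L_\K(E) \to L_\K(F)$ sending each Leavitt path generator of $L_\K(E)$ to its image under $\psi$. It is unital because $\Psi(1_E) = \sum_v \Psi(v) = \sum_v \psi(v) = \psi(1_E) = 1_F$, and graded because it agrees with the graded map $\psi$ on the homogeneous generators.

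\textbf{Agreement on $K_0^{\mathrm{gr}}$.} Any unital graded ring homomorphism $\phi \colon R \to S$ satisfies $K_0^{\mathrm{gr}}(\phi)[pR] = [\phi(p)S]$ for a homogeneous idempotent $p$, as one sees from the tensor-product description of the induced map. Since $\Psi$ and $\psi$ agree on every vertex $v \in E^0$, the induced maps $K_0^{\mathrm{gr}}(\Psi)$ and $K_0^{\mathrm{gr}}(\psi)$ take the same value on every generator $[vL_\K(E)]$ of $K_0^{\mathrm{gr}}(L_\K(E))$ as a $\Zz[\Gamma]$-module; being $\Zz[\Gamma]$-module homomorphisms, they coincide.

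\textbf{Isomorphism.} Assume now that $\psi$ is a graded ring isomorphism. Surjectivity of $\psi$ forces $\psi(\K \cdot 1_E)$ into the center $Z(L_\K(F))$, producing a subring of $Z(L_\K(F))$ isomorphic as a ring to $\K$ and containing $1_F$. From the known description of the center of a Leavitt path algebra of a finite graph --- a finite direct sum of copies of $\K$ (one per isolated sink) and of Laurent polynomial rings $\K[c, c^{-1}]$ (one per cycle without exits) --- one checks that $\K \cdot 1_F$ is the unique such subring: any non-scalar candidate either fails to be invertible inside a Laurent-polynomial summand or crosses summands and introduces zero divisors, in either case precluding it from lying in a field subring. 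Hence $\psi|_{\K \cdot 1_E}$ is implemented by a field automorphism $\sigma$ of $\K$. Let $\tilde\sigma^{-1}$ denote the graded ring automorphism of $L_\K(E)$ obtained by applying $\sigma^{-1}$ to the coefficients in the standard presentation; this is well-defined because the CK relations are $\Zz$-valued. A direct check on the generators and on $\K \cdot 1_E$ yields $\Psi = \psi \circ \tilde\sigma^{-1}$, which is a composition of two graded ring isomorphisms and is $\K$-linear by its construction through the universal property. Therefore $\Psi$ is a graded $\K$-algebra isomorphism.

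The main obstacle is the structural uniqueness claim about subrings of $Z(L_\K(F))$ isomorphic to $\K$ and containing $1_F$: the explicit center argument above is the most direct route, and an abstract approach (for instance, a Skolem--Noether-style argument) appears harder to make work at this level of generality.
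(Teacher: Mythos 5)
Your construction of $\Psi$ from the universal property, and your verification that $K_0^{\mathrm{gr}}(\psi)$ and $K_0^{\mathrm{gr}}(\Psi)$ agree by checking them on the generators $[vL_\K(E)]$, are correct and essentially identical to the paper's argument. The divergence is in the final step. There the paper applies the universal property a second time, to the Cuntz--Krieger $F$-family $\{\psi^{-1}(w),\psi^{-1}(f),\psi^{-1}(f^*)\}$, to obtain a $\K$-algebra homomorphism $\beta\colon L_\K(F)\to L_\K(E)$, and asserts that $\beta$ and $\Psi$ are mutually inverse; no information about the center is used.

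Your alternative argument has a genuine gap: the claim that $\K\cdot 1_F$ is the \emph{unique} subring of $Z(L_\K(F))$ containing $1_F$ and isomorphic to $\K$ is false at the level of generality of the lemma. Take $F$ to be two isolated vertices, so that $L_\K(F)\cong\K\times\K$ is its own center, and let $\tau$ be a nontrivial automorphism of $\K$ (say $\K=\Cz$ and $\tau$ complex conjugation). Then $\{(a,\tau(a)):a\in\K\}$ is a subring containing $(1,1)$ and isomorphic to $\K$; it crosses the two summands without creating any zero divisors, so it escapes your dichotomy. Nor is this a vacuous concern: $\psi(a,b)=(a,\tau(b))$ is a unital graded ring automorphism carrying $\K\cdot 1_E$ onto exactly this twisted diagonal, so there is no single field automorphism $\sigma$ with $\psi(\lambda 1_E)=\sigma(\lambda)1_F$, and the identity $\Psi=\psi\circ\tilde\sigma^{-1}$ on which your bijectivity argument rests is unavailable. (In this example $\Psi$ is the identity, so the lemma holds, but your proof does not establish it.) The same obstruction arises whenever the center has more than one summand, because the projections of a central subfield onto distinct summands can be twisted by different embeddings of $\K$ into itself; your description of $Z(L_\K(F))$ is also not quite the known one, but that is secondary.

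I will add that the scalar-twisting phenomenon you are wrestling with is a genuine subtlety of this step, and your approach handles it correctly in the case where the center is a single copy of $\K$: for instance, for the graph with one vertex and one loop, the map $\psi(\lambda x^n)=i^n\overline{\lambda}\,x^n$ on $\Cz[x,x^{-1}]$ is a unital graded ring automorphism for which the paper's $\beta$ does \emph{not} literally invert $\Psi$, whereas your factorization $\Psi=\psi\circ\tilde\sigma^{-1}$ does apply. So the idea is sound where a single $\sigma$ exists; the missing piece is an argument covering the case where $\psi$ restricted to $\K\cdot 1_E$ is not implemented by one field automorphism, and for that you would need either a direct proof of injectivity and surjectivity of $\Psi$ or a corrected construction of an inverse.
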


\begin{proof}
    Observe that $\{\psi(v), \psi(e),\psi(e^*) : v\in E^0, e\in E^1\}$ is a Cuntz--Krieger $E$-family in $L_\K(F)$.  By the universal property of Leavitt path algebras, there is a $\K$-algebra homomorphism $\Psi \colon L_\K(E) \to L_\K(F)$ such that $\Psi(v) = \psi(v)$, $\Psi(e) = \psi(e)$, $\Psi(e^*) = \psi(e^*)$, for all $v \in E^0$ and  $e \in E^1$.  As $\psi$ is a graded homomorphism, $\Psi$ is a graded homomorphism.  
    
    To show that $K_0^{\mathrm{gr}}(\psi)$ and $K_0^{\mathrm{gr}}(\Psi)$ coincide, it is enough to show that they agree on the generators $[v L_\K(E)]$ for all $v \in E^0$ of $K_0^\mathrm{gr}(L_\K(E))$.  Let ${}_{\psi} L_\K(F)$ be the graded $L_\K(E)$--$L_\K(F)$-bimodule with left action given by $a \cdot x = \psi(a)x$.  Similarly, ${}_{\Psi} L_\K(F)$ will denote the graded $L_\K(E)$--$L_\K(F)$-bimodule with left action given by $a \cdot x = \Psi(a)x$.  Then 
    $$
    K_0^{\mathrm{gr}}(\psi) ( [ v L_\K(E) ] ) = [  v L_\K(E) \otimes_{ L_\K(E) } {}_{\psi} L_\K(F) ] 
    $$
    and 
    $$
    K_0^{\mathrm{gr}}(\Psi) ( [ v L_\K(E) ] ) = [  v L_\K(E) \otimes_{ L_\K(E) } {}_{\Psi} L_\K(F) ].
    $$
    Hence, it is enough to show that $v L_\K(E) \otimes_{ L_\K(E) } {}_{\psi} L_\K(F)$ and $v L_\K(E) \otimes_{ L_\K(E) } {}_{\Psi} L_\K(F)$ are graded isomorphic.  This follows as the restriction of the graded isomorphisms 
    \begin{align*}
        L_\K(E) \otimes_{ L_\K(E) } {}_{\psi} L_\K(F) \longrightarrow L_\K(F) \quad \text{and} \quad L_\K(E) \otimes_{ L_\K(E) } {}_{\Psi} L_\K(F) \longrightarrow L_\K(F)
    \end{align*}    
    to $v L_\K(E) \otimes_{ L_\K(E) } {}_{\psi} L_\K(F)$ and $v L_\K(E) \otimes_{ L_\K(E) } {}_{\Psi} L_\K(F)$ have image $\psi(v)L_\K(F)$ and $\Psi(v)L_\K(F)$, respectively.  As $\psi(v)L_\K(F)=\Psi(v)L_\K(F)$, we get $K_0^{\mathrm{gr}}(\psi)$ and $K_0^{\mathrm{gr}}(\Psi)$ agree on the generators of $K_0^\mathrm{gr}$, and hence they are equal.

    Now assume that $\psi$ is an isomorphism.  Then $\{\psi^{-1}(w), \psi^{-1}(f),\psi^{-1}(f^*) : w\in F^0, f\in F^1\}$ is a Cuntz--Krieger $F$-family in $L_\K(E)$. 
    By the universal property of Leavitt path algebras, there exists a $\K$-algebra homomorphism $\beta\colon L_\K(F) \to L_\K(E)$ satisfying $\beta(w) = \psi^{-1}(w)$, $\beta(f) = \psi^{-1}(f)$, and $\beta(f^*) = \psi^{-1}(f^*)$, for all $w\in F^0$ and $f\in F^1$.
    In fact, since $\psi^{-1}$ is a graded homomorphism, it follows that $\beta$ is a graded homomorphism.
    It is now straightforward to see that $\Psi$ and $\beta$ are inverses of each other and therefore $\Psi$ is a graded isomorphism of algebras.
\end{proof}

We are in a position to extend a theorem of Ara and Pardo  \cite[Theorem 3.15]{Ara-Pardo}.

\begin{corollary} \label{thm:graded-isomorphism}
  Let $E$ and $F$ be finite graphs with no sinks, and let 
  $\K$ be a field. 
  If $E$ and $F$ are unitally aligned  module shift equivalent via matrices $(R, S)$, then the Leavitt path algebras $L_\K(E)$ and $L_\K(F)$ are graded isomorphic as $\K$-algebras via an isomorphism that induces the maps $(R, S)$ between dimension triples.
\end{corollary}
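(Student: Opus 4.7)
The plan is to promote the unital aligned module shift equivalence $(R,S)$ of length $m$ into a pointed graded Morita equivalence between $L_\K(E)$ and $L_\K(F)$, and then invoke Proposition \ref{prop:ring-lift} together with Lemma \ref{lem:ring-to-algebra-isomorphism} to lift it to a graded $\K$-algebra isomorphism inducing the given maps on the dimension triples. Applying Proposition \ref{prop:se-mod-hom} to $R$ produces a graded $L_\K(E)$--$L_\K(F)$-bimodule $M$ whose tensor functor $\cF_M$ realizes on $K_0^\gr$ exactly the $\Zz[x,x^{-1}]$-module homomorphism attached to $R$; applying it to $S$ produces a graded $L_\K(F)$--$L_\K(E)$-bimodule $N$ playing the symmetric role.

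The central step is to use the alignment data to build natural graded bimodule isomorphisms
\[
M \otimes_{L_\K(F)} N \;\cong\; L_\K(E)(m) \qquad \text{and} \qquad N \otimes_{L_\K(E)} M \;\cong\; L_\K(F)(m),
\]
so that $\cF_M$ and $\cF_N$ become mutually inverse graded equivalences up to the shift functor $\cT_m$. On the edge-bimodule level the required identifications are supplied by $\omega_E$ and $\omega_F$; the specified conjugacies $\sigma_G$ and $\sigma_H$ are what determine the left $L_\K(E)$- and $L_\K(F)$-actions on $M$ and $N$ through Proposition \ref{prop:se-mod-hom}; and the associator diagrams encode exactly the compatibility needed to ensure that $\omega_E$ and $\omega_F$ intertwine these actions. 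Extending across the Cuntz--Krieger relations in $L_\K(E)$ and $L_\K(F)$ then lifts these edge-level identifications to the displayed bimodule isomorphisms, so $\cF_M$ is a graded equivalence. The unital hypothesis next yields, via diagram \eqref{eq-kthy-shift}, the identity $[M] = [L_\K(F)]$ in $K_0^\gr(L_\K(F))$; since $M$ is as a right module a finite direct sum of projectives of the form $w L_\K(F)$, and since for Leavitt path algebras of finite graphs with no sinks the graded $K_0$-class classifies finitely generated graded projective modules up to graded isomorphism, this upgrades to a graded right $L_\K(F)$-module isomorphism $M \cong L_\K(F)$.

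At this point Proposition \ref{prop:ring-lift} applies and produces a unital graded ring isomorphism $\xi \colon L_\K(E) \to L_\K(F)$ satisfying $K_0^\gr(\xi) = K_0^\gr(\cF_M)$, and Lemma \ref{lem:ring-to-algebra-isomorphism} converts $\xi$ into a graded $\K$-algebra isomorphism $\Psi$ with the same induced $K_0^\gr$-map, which by Proposition \ref{prop:se-mod-hom} is exactly the map attached to $R$ on the dimension triples; by symmetry $\Psi^{-1}$ induces the map attached to $S$. The main obstacle lies in the construction of the bimodule isomorphism $M \otimes_{L_\K(F)} N \cong L_\K(E)(m)$ from the aligned data: while $\omega_E$ gives the identification on the degree-$m$ tensor piece, one must verify that the left $L_\K(E)$-module structure built from $\sigma_G$ through Proposition \ref{prop:se-mod-hom} matches the native multiplication on $L_\K(E)(m)$. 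The associator diagrams are designed precisely to enforce this compatibility at the level of a single $\K E^1$-factor, but extending through the Cuntz--Krieger relations $e^* e = r(e)$ and $v = \sum_{s(e) = v} e e^*$ requires an inductive argument along tensor powers of $\K E^1$ together with careful bookkeeping of the degree shift.
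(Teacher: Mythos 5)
Your overall architecture is exactly the paper's: form the bridging bimodule $M$ from $R$ via Proposition~\ref{prop:se-mod-hom}, show $\cF_M$ is a pointed graded equivalence, apply Proposition~\ref{prop:ring-lift} to get a graded ring isomorphism, and finish with Lemma~\ref{lem:ring-to-algebra-isomorphism}. The one substantive divergence is at the crux: the paper does \emph{not} construct the bimodule isomorphisms $M\otimes_{L_\K(F)}N\cong L_\K(E)(m)$ and $N\otimes_{L_\K(E)}M\cong L_\K(F)(m)$ by hand; it simply cites \cite[Theorem~6.7]{Abrams-Ruiz-Tomforde2023}, which states that for an aligned module shift equivalence the bridging bimodule implements a graded equivalence of module categories. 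You instead sketch a direct derivation of these isomorphisms from $\omega_E$, $\omega_F$, $\sigma_G$, $\sigma_H$ and the associator diagrams, and you candidly flag that the extension across the Cuntz--Krieger relations and tensor powers of $\K E^1$ is left as ``the main obstacle.'' As written, that is a genuine gap: the compatibility you need to verify is precisely the content of the cited theorem, and it is not a routine bookkeeping exercise (it is where alignment, as opposed to mere module shift equivalence, is actually used). The gap closes immediately by invoking \cite[Theorem~6.7]{Abrams-Ruiz-Tomforde2023}, so either cite it or carry out the induction in full.

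On the credit side, you treat one point more carefully than the paper does. Unitality of the shift equivalence gives, via diagram~\eqref{eq-kthy-shift}, only the equality $[M]=[L_\K(F)]$ in $K_0^{\gr}(L_\K(F))$, whereas Proposition~\ref{prop:ring-lift} requires an honest graded right-module isomorphism $M\cong L_\K(F)$. You correctly note that $M$ is a finite direct sum of modules $wL_\K(F)(n)$ and that for Leavitt path algebras of finite graphs with no sinks the class in $K_0^{\gr}$ determines the graded isomorphism class of a finitely generated graded projective module (cancellation in the graded $\mathcal V$-monoid), so the $K$-theoretic identity upgrades to a module isomorphism. The paper compresses this into the phrase ``$\cF_M$ is pointed''; making it explicit, as you do, is a genuine improvement. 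Your closing remark that $\Psi^{-1}$ induces the map attached to $S$ ``by symmetry'' should be stated with the usual caveat that $S$ agrees with $R^{-1}$ only up to the automorphism $\delta_A^{m}$, i.e.\ up to a power of $x$, consistent with the lag-$m$ relations $RS=A^m$ and $SR=B^m$.
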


\begin{proof}
If $E$ and $F$ are unitally aligned shift equivalent via $(R,S)$, then for $M \coloneq \K G_R^1$ the functor $\cF_M$ from Proposition \ref{prop:se-mod-hom} induces a  pointed isomorphism  $K_0^\gr(L_\K(E)) \to K_0^\gr(L_\K(F))$ and by \cite[Theorem 6.7]{Abrams-Ruiz-Tomforde2023}, the bridging bimodule $M$ induces a graded equivalence of categories from Mod-$L_\K(E)$ to Mod-$L_\K(F)$, so $\cF_M$ is an isomorphism.
As the shift equivalence is also unital, this means that $\cF_M$ is pointed.
We may now invoke Proposition \ref{prop:ring-lift} to find a graded isomorphism of rings $\xi\colon L_\K(E) \to L_\K(F)$ that induces the same map as $\cF_M$ on graded $K$-theory.
By Lemma \ref{lem:ring-to-algebra-isomorphism}, $\xi$ can be chosen to be a graded isomorphism of algebras.
\end{proof}

\end{document}